\newtheorem{thm}{Theorem}[section]
\newtheorem{lem}[thm]{Lemma}   
\newtheorem{cor}[thm]{Corollary}
\theoremstyle{definition}
\newtheorem{defn}{Definition}[section]
\theoremstyle{definition}
\renewcommand{\Im}{\operatorname{Im}}
\begin{document}
\title[Morse index of steady-states to the SKT model]
{Morse index of steady-states to the SKT model 
with Dirichlet boundary conditions}
 \thanks{This research was
partially supported by JSPS KAKENHI Grant Number 22K03379.}
\author[K. Kuto]{Kousuke Kuto$^\dag$}
\author[H. Sato]{Homare Sato$^\ddag$}
\thanks{$\dag$ Department of Applied Mathematics, 
Waseda University, 
3-4-1 Ohkubo, Shinjuku-ku, Tokyo 169-8555, Japan.}
\thanks{$\ddag$ Department of Pure and Applied Mathematics, 
Graduate School of Fundamental Science and Engineering,
Waseda University, 
3-4-1 Ohkubo, Shinjuku-ku, Tokyo 169-8555, Japan.}
\thanks{{\bf E-mail:} \texttt{kuto@waseda.jp}}
\date{\today}

\begin{abstract} 
This paper deals with the stability analysis for steady-states
perturbed by
the full cross-diffusion limit of the SKT
model with Dirichlet boundary conditions.
Our previous result showed that positive steady-states
consist of the branch of small coexistence type
bifurcating from the trivial solution and the branches
of segregation type bifurcating from points
on the branch of small coexistence type.
This paper shows the Morse index of steady-states
on the branches
and 
constructs the local unstable manifold around each
steady-state of which the dimension is equal to
the Morse index.
\end{abstract}

\subjclass[2020]{35B09, 35B32, 35B45, 35A16, 35J25, 92D25}
\keywords{cross-diffusion,
competition model,
limiting systems, 
perturbation,
stability,
bifurcation,
linear stability,
Morse index.}
\maketitle

\section{Introduction}
In this paper, we consider the following Lotka-Volterra 
competition model with equal cross-diffusion terms:
\begin{equation}\label{para}
\begin{cases}
u_{t}=\Delta [\,(1+\alpha v)u\,]+
u(\lambda -b_{1}u-c_{1}v)
\quad&\mbox{in}\ \Omega\times (0,T),\\
v_{t}\,=\Delta [\,(1+\alpha u)v\,]+
v(\lambda -b_{2}u-c_{2}v)
\quad&\mbox{in}\ \Omega\times (0,T),\\
u=v=0
\quad&\mbox{on}\ \Omega\times (0,T),\\
u(\,\cdot\,,0)=u_{0}\ge 0,\quad
v(\,\cdot\,,0)=v_{0}\ge 0
\quad&\mbox{in}\ \Omega,
\end{cases}
\end{equation}
where $\Omega(\subset\mathbb{R}^{N})$
is a bounded domain with a smooth boundary $\partial\Omega$
if $N\ge 2$;
a bounded interval $(-\ell,\ell)$ if $N=1$.
In 1979,
for the purpose of realizing segregation phenomena of two 
competing species by reaction-diffusion equations,
Shigesada, Kawasaki and Teramoto \cite{SKT}
proposed a population model consisting of 
the Lotka-Volterra competition system with
random-, self- and cross-diffusion terms.
Since the pioneering work, a class of Lotka-Volterra
systems with cross-diffusion like \eqref{para} is called
{\it the SKT model} cerebrating the authors of \cite{SKT}.

In \eqref{para}, the unknown functions 
$u(x,t)$ and $v(x,t)$ represent the population density of
two competing species, respectively;
$\lambda$,
$b_{i}$,
$c_{i}$
$(i=1,2)$
and
$\alpha$
are positive constants,
where $\lambda$ can be interpreted as the amount of resources
for both species;
$(b_{1}, c_{2})$ and $(c_{1}, b_{2})$ are coefficients of 
intra- and inter- specific competition, respectively.
The cross-diffusion term $\alpha\Delta (uv)$
represents an inter-species repulsive interaction
of diffusion and describes a situation where
each species promotes their own diffusion more
where there are more other species.
We refer to the book by Okubo and Levin \cite{OL}
for the bio-mechanism of diffusion terms such as
the cross-diffusion.

Concerning the solvability for a class of quasilinear
parabolic systems including \eqref{para},
in a series of works \cite{Am1, Am2, 
Am3}, 
Amann established the following time-local
well-posedness in the Sobolev space:
\begin{thm}[\cite{Am1, Am2, Am3}]\label{localsolthm}
Assume $u_{0}$,
$v_{0}\in W^{1,p}_{0}(\Omega )$
with $p\in (N, \infty) $.
Then \eqref{para}
has a unique solution $(u,v)$ satisfying
$$
u,\,v\in
C([0,T_{m}); W^{1,p}_{0}(\Omega ))
\cap
C^{2,1}(\overline{\Omega}\times (0,T_{m})),
$$
where 
$T_{m}$ is a maximal existence time.
Moreover, if
$T_{m}<\infty$,
then
$$
\lim_{t\nearrow T_{m}}(
\|u(\,\cdot\,,t)\|_{W^{1,p}_{0}(\Omega)}+
\|u(\,\cdot\,,t)\|_{W^{1,p}_{0}(\Omega)}
)=\infty.
$$
\end{thm}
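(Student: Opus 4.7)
The plan is to recast \eqref{para} in divergence form and invoke Amann's abstract theory for quasilinear parabolic systems developed in \cite{Am1, Am2, Am3}.

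First, expanding $\Delta[(1+\alpha v)u]=\nabla\!\cdot\![(1+\alpha v)\nabla u+\alpha u\,\nabla v]$ (and symmetrically for the $v$-equation) would rewrite the system as
\[
\mathbf{w}_t=\nabla\!\cdot\!\bigl(A(\mathbf{w})\nabla\mathbf{w}\bigr)+\mathbf{f}(\mathbf{w}),\qquad \mathbf{w}=(u,v)^T,
\]
where $\mathbf{f}$ is the (polynomial) Lotka-Volterra reaction and
\[
A(\mathbf{w})=\begin{pmatrix}1+\alpha v & \alpha u \\ \alpha v & 1+\alpha u\end{pmatrix},
\]
subject to homogeneous Dirichlet data. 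This is exactly the class of problems treated by Amann.

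Second, I would verify normal ellipticity of $A(\mathbf{w})$: on the biologically relevant cone $u,v\ge 0$, the trace $2+\alpha(u+v)$ and determinant $1+\alpha(u+v)$ of $A(\mathbf{w})$ are strictly positive, so its eigenvalues have positive real parts, uniformly on bounded sets. Because $p>N$ gives the continuous embedding $W^{1,p}_0(\Omega)\hookrightarrow C(\overline{\Omega})$, and because $A$ depends polynomially on $\mathbf{w}$, the superposition $\mathbf{w}\mapsto A(\mathbf{w})$ is smooth in the required Banach-space sense, placing \eqref{para} squarely in the hypotheses of Amann's local existence theorem and producing a unique maximal classical solution with the stated regularity.

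Third, the displayed blow-up alternative is then Amann's continuation principle: the solution can be continued past any time at which the $W^{1,p}_0$-norm remains bounded, so finiteness of $T_m$ forces the norm to diverge. The only structural ingredient specific to \eqref{para}---not really an obstacle---is the normal ellipticity verification above; once it is in place, the theorem is a direct translation of Amann's abstract results to the SKT system with Dirichlet boundary conditions.
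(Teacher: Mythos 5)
Your proposal is correct and follows essentially the same route as the paper, which simply invokes Amann's quasilinear theory \cite{Am1, Am2, Am3}: divergence-form rewriting, normal ellipticity of the diffusion matrix on the nonnegative cone (trace $2+\alpha(u+v)$, determinant $1+\alpha(u+v)$), smooth dependence of the coefficients via $W^{1,p}_0(\Omega)\hookrightarrow C(\overline{\Omega})$ for $p>N$, and Amann's continuation criterion for the blow-up alternative. The only point you gloss over is that nonnegativity of $(u,v)$ must be shown to persist (e.g.\ by a maximum-principle argument applied to each equation with the other component frozen) so that the solution stays in the region of normal ellipticity, which is what reduces the continuation criterion to the stated $W^{1,p}_0$-norm blow-up alone.
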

Concerning the time-global existence of all the solutions
obtained in Theorem \ref{localsolthm},
Kim \cite{Ki} showed $T_{m}=\infty$ in 
the one-dimensional case where $N=1$,
and in the sequel,
Lou and Winkler \cite{LW} assured $T_{m}=\infty$
when $N=2, 3$ with the additional condition
that $\Omega $ is convex. 
It should be noted that proofs in \cite{Ki} and \cite{LW}
assumed homogeneous Neumann boundary conditions,
but some modifications in their proofs assure
$T_{m}=\infty$ also under homogeneous Dirichlet 
boundary conditions as \eqref{para}.
See also \cite{Le}.

Next we introduce the bifurcation structure for
steady-states of \eqref{para} obtained by Inoue and the authors
\cite{IKS}. 
The associated stationary problem of \eqref{para}
is reduced to the following Dirichlet problem of
nonlinear elliptic equations
\begin{equation}\label{SP}
\begin{cases}
\Delta [\,(1+\alpha v)u\,]+
u(\lambda -b_{1}u-c_{1}v)=0
\ \ &\mbox{in}\ \Omega,\\
\Delta [\,(1+\alpha u)v\,]+
v(\lambda -b_{2}u-c_{2}v)=0
\ \ &\mbox{in}\ \Omega,\\
u\ge 0,\ \ v\ge 0
\ \ &\mbox{in}\ \Omega,\\
u=v=0\ \ &\mbox{on}\ \partial\Omega.
\end{cases}
\end{equation}
It is possible to verify that any weak solution
$(u,v)$ with $u\not\equiv 0$ and $v\not\equiv 0$ 
belongs to $C^{2}(\overline{\Omega})^{2}$ and
satisfies $u>0$ and $v>0$ in $\Omega$
by virtue of the elliptic regularity theory and
the maximum principle (e.g., \cite{GT}).
Throughout the paper, such a solution $(u,v)$ of \eqref{SP}
will be called a positive solution.

In order to express the bifurcation structure,
we introduce the following eigenvalue problem:
\begin{equation}\label{eg}
-\Delta\varPhi=\lambda \varPhi\quad\mbox{in}\ \Omega,
\qquad
\varPhi=0\quad\mbox{on}\ \partial\Omega.
\end{equation}
Hereafter, all eigenvalues of \eqref{eg} will be denoted by
\begin{equation}\label{lambdaj}
(0<)\lambda_{1}<\lambda_{2}\le\lambda_{3}\le\cdots\le\lambda_{j}
\le\cdots
\quad\mbox{with}\quad\lim_{j\to\infty}\lambda_{j}=\infty,
\end{equation}
counting multiplicity.
It is known from \cite{KY1, KY2, Ru} that,
if $\alpha>0$ is sufficiently large,
then $\lambda_{1}$
yields a threshold for the nonexistence/existence
of positive solutions in the sense that
\eqref{SP} has no positive solution when 
$0<\lambda\le\lambda_{1}$;
at least one positive solution when $\lambda>\lambda_{1}$.
The usual function space $W^{2,p}(\Omega )\cap W^{1,p}_{0}(\Omega )$
will be often used in the later argument.
Then we denote
$$
X_{p}:=W^{2,p}(\Omega )\cap W^{1,p}_{0}(\Omega),\quad
\boldsymbol{X}_{p}:=X_{p}\times X_{p}.
$$

In \cite{IKS}, 
the asymptotic behavior of positive solutions
of \eqref{SP} at the full cross-diffusion as $\alpha\to\infty$
was studied.
(As a similar perspective 
on the Neumann problem for the stationary SKT model, 
we refer to \cite{Ku1, LX, LW1, LW2, LN1, LN2, 
LNY1, LNY2, MSY, WWX, Wu, WX, ZW}
and references therein for the unilateral cross-diffusion limit,
and to
\cite{Ka, Ku2, Ku3} for 
the full cross-diffusion limit.)
It was shown in \cite{IKS} that,
if $\lambda>\lambda_{1}$ and 
$\lambda\neq\lambda_{j}$ for any $j\ge 2$,
then any sequence $\{(u_{n}, v_{n})\}$ of 
positive solutions of \eqref{SP} with $\alpha=\alpha_{n}\to\infty$
satisfies either of the following two scenarios:

\begin{enumerate}[(I)]
\item[(I)]
({\it small coexistence})
there exists a positive function $U\in C^{2}(\overline{\Omega})$
such that
\begin{equation}\label{unif}
\lim\limits_{n\to\infty}
(\alpha_{n}u_{n}, \alpha_{n}v_{n})= (U,U)
\quad\mbox{in}\ \boldsymbol{X}_{p}
\end{equation}
for any $p\in (N,\infty)$, 
passing to a subsequence if necessary,
and moreover, $U$ satisfies the following limiting equation:
\begin{equation}\label{LS1}
\begin{cases}
\Delta [\,(1+U)U\,]+\lambda U=0\ \ &\mbox{in}\ \Omega,\\
U=0\ \ &\mbox{on}\ \partial\Omega;
\end{cases}
\end{equation}

\item[(II)]
({\it complete segregation})
there exists a sign-changing function $w\in C^{2}(\overline{\Omega})$ 
such that
$$
\lim\limits_{n\to\infty}
(u_{n}, v_{n})= (w_{+}, w_{-})
\quad\mbox{uniformly in}\  
\overline{\Omega},
$$
passing to a subsequence if necessary,
where
$w_{+}(x):=\max\{w(x),0\}$ and
$w_{-}(x):=\max\{0,-w(x)\}$,
and moreover,
$w$ satisfies the following limiting equation
\begin{equation}\label{LS2}
\begin{cases}
\Delta w+\lambda w-b_{1}(w_{+})^{2}+c_{2}(w_{-})^{2}=0
\ \ &\mbox{in}\ \Omega,\\
w=0\ \ &\mbox{on}\ \partial\Omega.
\end{cases}
\end{equation}
\end{enumerate}
It was also shown that the set of all the positive solutions
of \eqref{LS1} form a curve
$$\widetilde{\mathcal{C}}_{\infty}=
\{(\lambda, U(\,\cdot\,,\lambda))\in (\lambda_{1},\infty)
\times X_{p}\}$$
with $\lim_{\lambda\searrow\lambda_{1}}U(\,\cdot\,, \lambda )=0$
in $X_{p}$, see Figures 1 and 2.
In the one-dimensional case where $\Omega=(-\ell, \ell )$,
it is known that
the set $\widetilde{\mathcal{S}}^{+}_{j,\infty}$
(resp.\,$\widetilde{\mathcal{S}}^{-}_{j,\infty}$)
of solutions of \eqref{LS2}
with exact $j-1$ zeros 
in $\Omega$
and $w'(-\ell )>0$
(resp. $w'(-\ell )<0$)
forms a curve
$$\widetilde{\mathcal{S}}^{+}_{j,\infty}=\{
(\lambda, w(\,\cdot\,, \lambda ))\in (\lambda_{j}, \infty)\times X_{p}\}\quad 
(\mbox{resp.}\ 
\widetilde{\mathcal{S}}^{-}_{j,\infty}=\{
(\lambda, \widetilde{w}(\,\cdot\,, \lambda ))\in (\lambda_{j}, \infty)\times X_{p}\}).$$
Here 
$\widetilde{\mathcal{S}}^{+}_{j,\infty}\cup\{(\lambda_{j},0)\}\cup
\widetilde{\mathcal{S}}^{-}_{j,\infty}$
forms a pitchfork bifurcation curve bifurcating from
the trivial solution at $\lambda=\lambda_{j}$
(e.g., \cite{HY}).

In \cite{IKS},
a subset of positive solutions of \eqref{SP}
was constructed by the perturbation of
$\widetilde{\mathcal{C}}_{\infty}$,
$\widetilde{\mathcal{S}}^{\pm}_{j, \infty}$
when $\alpha>0$ is sufficiently large.
More precisely, 
for any given large $\varLambda >0$,
there exists a large
$\overline{\alpha}=\overline{\alpha}(\varLambda)>0$
such that,
if $\alpha>\overline{\alpha}$,
then there exists a bifurcation curve
\begin{equation}\label{bif1}
\mathcal{C}_{\alpha, \varLambda}=\{
(\lambda, u(\,\cdot\,,\lambda, \alpha ), v(\,\cdot\,,\lambda, \alpha ))
\in (\lambda_{1}, \varLambda]\times \boldsymbol{X}_{p}\}
\end{equation}
with $\lim_{\lambda\searrow\lambda_{1}}
(u(\,\cdot\,,\lambda, \alpha ), v(\,\cdot\,,\lambda, \alpha ))=
(0,0)$ in $\boldsymbol{X}_{p}$ and
\begin{equation}\label{limc}
\lim_{\alpha\to\infty}
\alpha (u(\,\cdot\,,\lambda, \alpha ), v(\,\cdot\,,\lambda, \alpha ))
=(U(\,\cdot\,,\lambda), U(\,\cdot\,,\lambda))\quad\mbox{in}\
\boldsymbol{X}_{p}
\end{equation}
for each $\lambda\in (\lambda_{1},\varLambda]$.
Then it can be said that $\mathcal{C}_{\alpha, \varLambda}$
is the perturbation (with scaling)
of $\widetilde{\mathcal{C}}_{\infty}$ 
over the range $\lambda\in (\lambda_{1}, \varLambda]$.
It was also proved that
$\mathcal{C}_{\alpha, \varLambda}$ can be extended in
the direction $\lambda\to\infty$ as a connected 
subset of positive solutions of \eqref{SP}.

Furthermore, it was shown in \cite{IKS} that
the subsets of positive solutions of \eqref{SP} with $\Omega=(-\ell, \ell)$
can be constructed by perturbation of $\widetilde{\mathcal{S}}^{\pm}_{j, \infty}$
if $\alpha>0$ is sufficiently large.
To be precise,
for each $\lambda_{j}\in (\lambda_{1}, \varLambda )$,
there exists $\overline{\alpha}_{j}>0$ and
$\beta_{j,\alpha}\in (\lambda_{1}, \varLambda )$ 
such that,
if $\alpha>\overline{\alpha}_{j}$,
then bifurcation curves of positive solutions of
\eqref{SP} can be parameterized by one variable $s$ as follows
\begin{equation}\label{seg+}
\mathcal{S}^{+}_{j, \alpha, \varLambda}=
\{
(\lambda, u,v)=(
\xi_{j}(s, \alpha), u_{j}(\,\cdot\,,s,\alpha ), 
v_{j}(\,\cdot\,,s, \alpha ))\,:\,0<s\le T^{\alpha, +}_{j}\}
\end{equation}
and
\begin{equation}\label{seg-}
\mathcal{S}^{-}_{j, \alpha, \varLambda}=
\{
(\lambda, u,v)=(
\xi_{j}(s, \alpha), u_{j}(\,\cdot\,,s,\alpha ), 
v_{j}(\,\cdot\,,s, \alpha ))\,:\,-T^{\alpha, -}_{j}\le s<0\}
\end{equation}
with
$\lim_{s\to 0}
(\xi_{j}(s, \alpha), u_{j}(\,\cdot\,,s,\alpha ),
v_{j}(\,\cdot\,,s, \alpha ))=
(\beta_{j,\alpha}, u(\,\cdot\,, \beta_{j,\alpha}, \alpha ),
v(\,\cdot\,, \beta_{j,\alpha}, \alpha ))
\in \mathcal{C}_{\alpha, \varLambda}$
in $\mathbb{R}\times \boldsymbol{X}_{p}$
and
\begin{equation}\label{wconv}
\lim_{\alpha\to\infty}
(u_{j}(\,\cdot\,,s,\alpha ),
v_{j}(\,\cdot\,,s, \alpha ))
=(w_{+}, w_{-})\quad\mbox{uniformly in}\ \overline{\Omega },
\end{equation}
where 
\begin{equation}
w\in
\begin{cases}
\widetilde{\mathcal{S}}^{+}_{j, \alpha, \varLambda}
\quad\mbox{with}\ \ \lambda=\xi_{j}(s, \alpha)
\quad\mbox{if}\ \ s>0,\\
\widetilde{\mathcal{S}}^{-}_{j, \alpha, \varLambda}
\quad\mbox{with}\ \ \lambda=\xi_{j}(s, \alpha)
\quad\mbox{if}\ \ s<0.
\end{cases}
\nonumber
\end{equation}
Then it follows that
$\mathcal{S}^{-}_{j, \alpha, \varLambda}\cup
\{(\beta_{j,\alpha}, u(\,\cdot\,, \beta_{j,\alpha}, \alpha ),
v(\,\cdot\,, \beta_{j,\alpha}, \alpha ))
\}
\cup\mathcal{S}^{+}_{j, \alpha, \varLambda}
$
forms a pitchfork bifurcation curve bifurcating from
the solution 
$(\beta_{j,\alpha}, u(\,\cdot\,, \beta_{j,\alpha}, \alpha ),
v(\,\cdot\,, \beta_{j,\alpha}, \alpha ))$
on the branch $\mathcal{C}_{\alpha, \varLambda}$,
see Figures 1 and 3.

This paper focuses on the stability analysis for
steady-states of \eqref{para} in the case where
$\alpha$ is sufficiently large.
It will be shown that
each positive solution on $\mathcal{C}_{\alpha, \varLambda}$
is linearly unstable, whereas semi-trivial solutions,
such that one of $u$ and $v$ is positive and the other is zero in $\Omega$,
are linearly stable.
In the one-dimensional case,
we show that if $\lambda\in (\lambda_{j}, \lambda_{j+1})$ and
$\alpha>0$ is sufficiently large, then
the Morse index of 
$(\lambda, u(\,\cdot\,, \lambda, \alpha ), 
v(\,\cdot\,, \lambda, \alpha ))\in \mathcal{C}_{\alpha, \varLambda}$
is equal to $j$,
whereas the Morse index of
$(\lambda, u_{j}(\,\cdot\,,s,\alpha ), v_{j}(\,\cdot\,,s,\alpha ))
\in\mathcal{S}^{\pm}_{j, \alpha, \varLambda}$
is equal to $j-1$.
These results invoking the dynamical theory for a class of 
quasilinear parabolic equations including \eqref{para}
(e.g. \cite[p.312]{Yag}) enable us to construct the local unstable manifold of each positive steady-state whose dimension is
equal to the Morse index.

To summarize our results from the ecological view-point,
three bifurcation branches 
(consisting of semi-trivial solutions whose $v$ component vanishes;
semi-trivial solutions whose $u$ component vanishes;
positive solutions of small coexistence type)
bifurcate 
from the trivial solution at $\lambda=\lambda_{1}$.
Among three branches,
two branches of semi-trivial solutions are linearly stable and 
the branch $\mathcal{C}_{\alpha, \varLambda}$
of positive solutions of small coexistence type is linearly unstable. Following the branch $\mathcal{C}_{\alpha, \varLambda}$
in the direction of increasing $\lambda$, 
the dimension of
the local unstable manifold is equal to $1$ when $\lambda-\lambda_{1}>0$ is small, and then, 
the dimension of the local unstable manifold increases by $1$
passing each bifurcation point from which the branch of positive solutions
of segregation type bifurcate.
Each positive solution on the branch 
$\mathcal{S}^{\pm}_{j,\alpha, \varLambda}$ possesses the unstable dimension $j-1$
(i.e. the dimension of the local unstable manifold), which is
equal to the number of locations where the segregation occurs since
it can be interpreted that 
the competitive species are generally segregating near 
zeros of $w(x)$ by \eqref{wconv}.
In other words, the number of segregation points increases, 
the more unstable the steady-states become.
Indeed, in the final section of this paper, 
it will be shown that these purely mathematical results 
for the calculation of the Morse index are supported 
by the numerical simulations
(Figure 4).

The contents of this paper is as follows:
In Section 2, the linearly stable/unstable of 
the trivial and the semi-trivial steady-states will be shown
as the preliminary.
In Section 3, we drive the Morse index of 
positive steady-states of small coexistence type on 
the branch $\mathcal{C}_{\alpha, \varLambda}$.
In Section 4, in the one-dimensional case,
we show that the Morse index of 
each positive steady-state of segregation type on
the branches $\mathcal{S}^{\pm}_{j,\alpha, \varLambda}$
is equal to $j-1$.
In Section 5, we exhibit a numerical bifurcation
diagram with information on the Morse index by using 
the continuation software \texttt{pde2path}.

\section{Preliminary}
In what follows, we mainly discuss the linear stability/instability of
steady-states to \eqref{para}.
Corresponding to \eqref{SP},
we define the nonlinear operator
$F\,:\, \mathbb{R}_{+}\times \boldsymbol{X}_{p}\to\boldsymbol{Y}_{p}
(\,:=\,L^{p}(\Omega )\times L^{p}(\Omega ))$ by
$$
F(\lambda, u, v):=
-\Delta\biggl[
\begin{array}{l}
(1+\alpha v)u\\
(1+\alpha u)v
\end{array}
\biggr]
-\biggl[
\begin{array}{l}
u(\lambda -b_{1}u-c_{1}v)\\
v(\lambda -b_{2}u-c_{2}v)
\end{array}
\biggr].
$$
For each solution $(u^{*}, v^{*})$ of \eqref{SP},
we denote the linearized operator of $F$ around $(u^{*}, v^{*})$
by
$L(\lambda, u^{*}, v^{*} )$,
that is,
$L(\lambda, u^{*}, v^{*} ):=D_{(u,v)}F(\lambda, u^{*}, v^{*})$.

Hence it is easily verified that
$L(\lambda, u^{*}, v^{*} )
\in\mathcal{L}(\boldsymbol{X}_{p},
\boldsymbol{Y}_{p})$,
where $\mathcal{L}(\boldsymbol{X}_{p},
\boldsymbol{Y}_{p})$ represents
the set of bounded linear operators from $\boldsymbol{X}_{p}$
to $\boldsymbol{Y}_{p}$.
The linearized eigenvalue problem is formulated as
\begin{equation}\label{eg2}
L(\lambda, u^{*}, v^{*})
\biggl[
\begin{array}{l}
\phi\\
\psi
\end{array}
\biggr]
=
\mu
\biggl[
\begin{array}{l}
\phi\\
\psi
\end{array}
\biggr].
\end{equation}
Hence
$L(\lambda, u^{*}. v^{*})$ is represented by
\begin{equation}\label{linop}
\begin{split}
L(\lambda, u^{*},v^{*} )
\biggl[
\begin{array}{c}
\phi\\
\psi
\end{array}
\biggr]
=
&-\Delta\biggl(
\biggl[
\begin{array}{ll}
1+\alpha v^{*} & \alpha u^{*}\\
\alpha v^{*} & 1+\alpha u^{*}
\end{array}
\biggr]
\biggl[
\begin{array}{c}
\phi\\
\psi
\end{array}
\biggr]
\biggr)\\
&-
\biggl[
\begin{array}{ll}
\lambda -2b_{1}u^{*}-c_{1}v^{*} & -c_{1}u^{*}\\
-b_{2}v^{*} & \lambda -b_{2}u^{*}-2c_{2}v^{*}
\end{array}
\biggr]
\biggl[
\begin{array}{c}
\phi\\
\psi
\end{array}
\biggr].
\end{split}
\end{equation}
\begin{defn}
If all the eigenvalues of \eqref{eg2}
have positive real parts, then
the steady-state $(u^{*}, v^{*})$ is called
{\it linearly stable}.
If all the eigenvalues of \eqref{eg2} have
nonnegative real parts and there exist an eigenvalue
whose real part is equal to zero, then
$(u^{*}, v^{*})$ is called
{\it neutrally stable}.
If \eqref{eg2} has at least one eigenvalue
whose real part is negative,
then 
$(u^{*}, v^{*})$ is called
{\it linearly unstable}.
\end{defn}
Let $q(x)\in\mathbb{R}$ and 
$r(x)\ge(\not\equiv)\,0$ 
be 
H\"{o}lder continuous
functions in $\overline{\Omega }$.
Consider the following eigenvalue problem:
\begin{equation}\label{eg3}
\begin{cases}
-\Delta\phi+q(x)\phi =\mu r(x)\phi\quad&\mbox{in}\ \Omega,\\
\phi =0\quad&\mbox{on}\ \partial\Omega.
\end{cases}
\end{equation}
It is well-known that all the eigenvalues of \eqref{eg3}
consist of non-decreasing sequence
$$\mu_{1}<\mu_{2}\le\mu_{3}\le\cdots\quad
\mbox{with}\ \lim_{j\to\infty}\mu_{j}=\infty
\quad
\mbox{(counting multiplicity)},
$$
and moreover,
the least eigenvalue $\mu_{1}=\mu_{1}(q,r)$
can be characterized by the following variational formula:
$$
\mu_{1}(q,r)=
\inf
\left\{
\dfrac{\int_{\Omega}(|\nabla\phi|^{2}+
q(x)\phi^{2})}{
\int_{\Omega}r(x)\phi^{2}}\,:\,
\phi\in H^{1}_{0}(\Omega),\,
\int_{\Omega}r(x)\phi^{2}>0\right\}.
$$

\begin{lem}\label{monolem}
The least eigenvalue $\mu_{1}(q,r)$ 
is monotone increasing with respect to
$q$ and monotone decreasing with respect to $r$ 
in the following sense.
\begin{enumerate}[{\rm (i)}]
\item
If $q_{1}(x)\le(\not\equiv)\,q_{2}(x)$
in $\Omega$, then 
$\mu_{1}(q_{1}, r)<\mu_{1}(q_{2}, r)$.
\item
If $r_{1}(x)\le(\not\equiv)\,r_{2}(x)$
in $\Omega$, then 
$\mu_{1}(q, r_{1})>\mu_{1}(q, r_{2})$.
\end{enumerate}
\end{lem}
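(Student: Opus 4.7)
The plan is to exploit the variational characterization of $\mu_{1}(q,r)$ stated just above the lemma, together with the strict positivity of the associated principal eigenfunction. The latter follows from the strong maximum principle and Hopf boundary lemma applied to a suitable resolvent reformulation of \eqref{eg3}, or equivalently from the Krein--Rutman theorem applied to a compact positive operator. In either case, the principal eigenfunction can be chosen strictly positive in $\Omega$.

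For (i), I would fix the principal eigenfunction $\phi_{2}\in H^{1}_{0}(\Omega)$ associated with $\mu_{1}(q_{2},r)$, normalized so that $\int_{\Omega}r\phi_{2}^{2}=1$. Using $\phi_{2}$ as a test function in the Rayleigh quotient for $\mu_{1}(q_{1},r)$, the variational formula yields
\begin{equation*}
\mu_{1}(q_{1},r)\;\le\;\int_{\Omega}\bigl(|\nabla\phi_{2}|^{2}+q_{1}\phi_{2}^{2}\bigr)\;=\;\mu_{1}(q_{2},r)-\int_{\Omega}(q_{2}-q_{1})\phi_{2}^{2}.
\end{equation*}
Since $q_{2}-q_{1}\ge(\not\equiv)\,0$ and $\phi_{2}>0$ throughout $\Omega$, the last integral is strictly positive, which gives the strict inequality claimed in (i).

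For (ii), I would take the principal eigenfunction $\phi_{1}$ of $\mu_{1}(q,r_{1})$, normalized so that $\int_{\Omega}r_{1}\phi_{1}^{2}=1$. Testing the eigenvalue equation satisfied by $\phi_{1}$ against $\phi_{1}$ itself produces the identity $\int_{\Omega}\bigl(|\nabla\phi_{1}|^{2}+q\phi_{1}^{2}\bigr)=\mu_{1}(q,r_{1})$. Substituting $\phi_{1}$ into the Rayleigh quotient for $\mu_{1}(q,r_{2})$ then gives
\begin{equation*}
\mu_{1}(q,r_{2})\;\le\;\frac{\mu_{1}(q,r_{1})}{\int_{\Omega}r_{2}\phi_{1}^{2}},
\end{equation*}
and by $r_{1}\le(\not\equiv)\,r_{2}$ together with $\phi_{1}>0$ one has $\int_{\Omega}r_{2}\phi_{1}^{2}>\int_{\Omega}r_{1}\phi_{1}^{2}=1$. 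When $\mu_{1}(q,r_{1})>0$ this immediately delivers $\mu_{1}(q,r_{2})<\mu_{1}(q,r_{1})$.

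The main obstacle is controlling the sign issue hidden in part (ii): the naive scaling argument either yields only $\le$, or even reverses direction, when $\mu_{1}(q,r_{1})\le 0$. To treat all sign cases uniformly, I would fall back on the implicit characterization of $\mu_{1}(q,r)$ as the unique real number $\mu$ for which the least Dirichlet eigenvalue of $-\Delta+q-\mu r$ equals zero. The well-known strict monotonicity of the first Dirichlet eigenvalue with respect to the potential, applied to $q-\mu r_{1}$ and $q-\mu r_{2}$, combined with a one-parameter continuity argument in $\mu$, then extracts the strict monotonicity in $r$ claimed in (ii) regardless of the sign of $\mu_{1}$.
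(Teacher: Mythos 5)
Your part (i) is fine: testing the Rayleigh quotient for $\mu_{1}(q_{1},r)$ with the (strictly positive, suitably normalized) principal eigenfunction of $\mu_{1}(q_{2},r)$ gives exactly the strict inequality, because $q_{2}-q_{1}\ge(\not\equiv)\,0$ is continuous, hence positive on an open set where $\phi_{2}^{2}>0$. For comparison, the paper does not prove the lemma at all but simply cites \cite[Corollary 2.2]{CC2} and \cite[Theorem 4.2]{Ni}, so a self-contained variational proof of (i) is a perfectly acceptable substitute, and your first argument for (ii) correctly settles the case $\mu_{1}(q,r_{1})>0$.

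The genuine gap is your fallback for (ii) ``regardless of the sign of $\mu_{1}$''. Write, in the paper's notation, $\Lambda(\mu;r):=\mu_{1}(q-\mu r,\,1)$, so that $\mu_{1}(q,r)$ is the unique zero of the strictly decreasing function $\mu\mapsto\Lambda(\mu;r)$. The two potentials you propose to compare differ by $(q-\mu r_{1})-(q-\mu r_{2})=\mu\,(r_{2}-r_{1})$, whose sign depends on $\mu$: monotonicity in the potential gives $\Lambda(\mu;r_{2})<\Lambda(\mu;r_{1})$ only for $\mu>0$, the reverse inequality for $\mu<0$, and equality at $\mu=0$. Evaluating at $\mu=\mu_{1}(q,r_{1})$ therefore yields $\mu_{1}(q,r_{2})<\mu_{1}(q,r_{1})$ only when $\mu_{1}(q,r_{1})>0$, equality when it vanishes, and the opposite strict inequality when it is negative; since $\Lambda(0;r)=\mu_{1}(q,1)$ is independent of $r$, the sign of $\mu_{1}(q,r)$ is determined by $q$ alone and both eigenvalues lie on the same side of zero. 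This is not a repairable defect of your argument but a defect of the unqualified statement: for $\Omega=(0,\pi)$, $q\equiv-4$, $r_{1}\equiv1$, $r_{2}\equiv2$ one computes $\mu_{1}(q,r_{1})=-3<-\tfrac{3}{2}=\mu_{1}(q,r_{2})$, so (ii) fails when the principal eigenvalue is negative. The correct conclusion of your own continuity argument is that (ii) holds exactly when $\mu_{1}(q,1)>0$ (equivalently when the principal eigenvalues are positive), which is the setting of the cited results and of the paper's applications; the claim that the one-parameter argument delivers strict monotonicity in $r$ in all sign cases cannot be made rigorous.
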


\begin{proof}
See e.g. \cite[Corollary 2.2]{CC2} and 
\cite[Theorem 4.2]{Ni}
for the proofs of 
the assertions (i) and (ii), respectively.
\end{proof}
In this section, we note the linearized stability/instability
of the trivial and the semi-trivial solutions
to \eqref{SP}.

\begin{lem}\label{tristlem}
If $\lambda\le\lambda_{1}$,
then \eqref{SP} admits only the trivial solution
$(u,v)=(0,0)$, and moreover, 
it is linearly stable when $\lambda<\lambda_{1}$
and
it is neutrally stable when $\lambda=\lambda_{1}$.
If $\lambda>\lambda_{1}$, then
the trivial solution is linearly unstable.
\end{lem}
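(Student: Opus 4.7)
The plan is to separate the statement into a nonexistence part (showing only the trivial solution exists when $\lambda\le\lambda_{1}$) and a spectral part (computing the stability of $(0,0)$). Both parts reduce to the scalar Dirichlet eigenvalue problem \eqref{eg}.

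For the nonexistence part, suppose $(u,v)$ solves \eqref{SP} with $(u,v)\not\equiv(0,0)$. Without loss of generality $u\not\equiv 0$; the case $v\not\equiv 0$ is identical up to exchanging the roles and working with the second equation. The strong maximum principle applied to the first equation then forces $u>0$ in $\Omega$. The key trick is to test the PDE against the first eigenfunction $\varPhi_{1}>0$ of \eqref{eg} after introducing the auxiliary function $\phi:=(1+\alpha v)u = u+\alpha uv$. Rewriting the first equation as $-\Delta\phi = u(\lambda-b_{1}u-c_{1}v)$ and integrating by parts against $\varPhi_{1}$ (both $\phi$ and $\varPhi_{1}$ vanish on $\partial\Omega$) yields
$$
\lambda_{1}\int_{\Omega} \phi\,\varPhi_{1} = \lambda\int_{\Omega} u\,\varPhi_{1} - b_{1}\int_{\Omega} u^{2}\varPhi_{1} - c_{1}\int_{\Omega} uv\,\varPhi_{1}.
$$
Substituting $\phi=u+\alpha uv$ and rearranging gives
$$
(\lambda-\lambda_{1})\int_{\Omega} u\,\varPhi_{1} = b_{1}\int_{\Omega} u^{2}\varPhi_{1} + (c_{1}+\alpha\lambda_{1})\int_{\Omega} uv\,\varPhi_{1},
$$
whose right-hand side is strictly positive because $u>0$, $v\ge 0$, and $\varPhi_{1}>0$ in $\Omega$. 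This forces $\lambda>\lambda_{1}$, so only $(0,0)$ can solve \eqref{SP} when $\lambda\le\lambda_{1}$.

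For the spectral part, plugging $(u^{*},v^{*})=(0,0)$ into \eqref{linop} annihilates every coefficient involving $u^{*}$ or $v^{*}$, leaving the completely decoupled operator
$$
L(\lambda,0,0)\begin{bmatrix}\phi\\ \psi\end{bmatrix} = \begin{bmatrix}-\Delta\phi-\lambda\phi\\ -\Delta\psi-\lambda\psi\end{bmatrix}.
$$
Its spectrum on $\boldsymbol{X}_{p}$ is therefore exactly $\{\lambda_{j}-\lambda\}_{j\ge 1}$, each eigenvalue of \eqref{eg} contributing twice. The three regimes then follow immediately from \eqref{lambdaj}: when $\lambda<\lambda_{1}$ every $\lambda_{j}-\lambda>0$ so $(0,0)$ is linearly stable; when $\lambda=\lambda_{1}$ the smallest eigenvalue equals $0$ with all others strictly positive so $(0,0)$ is neutrally stable; and when $\lambda>\lambda_{1}$ the eigenvalue $\lambda_{1}-\lambda<0$ renders $(0,0)$ linearly unstable.

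No step is particularly delicate. The one point deserving attention is ensuring the initial strict positivity $u>0$ in $\Omega$ (rather than merely $u\not\equiv 0$) so that $\int_{\Omega}u^{2}\varPhi_{1}>0$ in the decisive inequality; this is a standard consequence of the strong maximum principle applied to the first equation of \eqref{SP} written in divergence form, exactly as invoked in the regularity/positivity remark below \eqref{SP}.
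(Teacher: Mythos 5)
Your proposal is correct, and its spectral half coincides with the paper's argument: linearizing at $(0,0)$ decouples into two copies of $-\Delta-\lambda$ with Dirichlet conditions, whose eigenvalues $\{\lambda_{j}-\lambda\}_{j\ge 1}$ immediately give linear stability for $\lambda<\lambda_{1}$, neutral stability at $\lambda=\lambda_{1}$, and instability for $\lambda>\lambda_{1}$ (the paper phrases this through the sign of $\mu_{1}(-\lambda,1)$ together with the Krein--Rutman theorem and Lemma \ref{monolem}, which is the same computation). Where you genuinely diverge is the nonexistence part: the paper simply cites \cite[Lemma 3.1]{KY2}, whereas you prove it directly by writing the first equation for $\phi=(1+\alpha v)u$, testing against the principal eigenfunction $\varPhi_{1}>0$, and extracting the identity $(\lambda-\lambda_{1})\int_{\Omega}u\varPhi_{1}=b_{1}\int_{\Omega}u^{2}\varPhi_{1}+(c_{1}+\alpha\lambda_{1})\int_{\Omega}uv\varPhi_{1}$, which forces $\lambda>\lambda_{1}$ whenever $u\not\equiv 0$ (and symmetrically for $v$). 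This is a sound and self-contained alternative; it buys independence from the cited reference at the cost of a short computation, and in fact you need even less than you claim: strict positivity $u>0$ is not required, since $u\ge 0$, $u\not\equiv 0$ and $\varPhi_{1}>0$ already make $\int_{\Omega}u\varPhi_{1}$ and $\int_{\Omega}u^{2}\varPhi_{1}$ strictly positive (a relevant remark, because the positivity statement below \eqref{SP} is formulated only for solutions with both components nontrivial, so invoking it verbatim in the semi-trivial case would be slightly off).
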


\begin{proof}
See \cite[Lemma 3.1]{KY2}
for the nonexistence of nontrivial solutions of \eqref{SP}
when $\lambda\le\lambda_{1}$.

Setting $(u^{*}, v^{*})=(0,0)$ in \eqref{linop},
we see that
the linearized operator $L(\lambda, 0,0)$
is expressed as
$L(\lambda, 0,0 )=-\Delta-\lambda I$.
Therefore, the linearized eigenvalue problem
\eqref{eg2} with  $(u^{*},v^{*})=(0,0)$ is reduced to
the linear elliptic equations
$$
\begin{cases}
-\Delta\phi-\lambda \phi =\mu\phi\quad &\mbox{in}\ \Omega,\\
-\Delta\psi-\lambda \psi =\mu\psi\quad &\mbox{in}\ \Omega,\\
\phi =\psi =0\quad &\mbox{on}\ \partial\Omega.
\end{cases}
$$
by \eqref{linop}.
Hence the linearized stability/instability can be determined by
the sign of $\mu_{1}(-\lambda , 1)$.
In view of \eqref{lambdaj} and the Krein-Rutman theorem,
we obtain
$\mu_{1}(-\lambda_{1},1)=0$.
By using (i) of Lemma \ref{monolem},
we know
\begin{equation}\label{newstar}
\mu_{1}(-\lambda, 1)
\begin{cases}
>0\quad &\mbox{if}\ \lambda<\lambda_{1},\\
=0\quad &\mbox{if}\ \lambda=\lambda_{1},\\
<0\quad &\mbox{if}\ \lambda>\lambda_{1}.
\end{cases}
\end{equation}
Then all the assertions of Lemma \ref{tristlem} are verified. 
\end{proof}

Next we consider the linearized stability/instability of
semi-trivial solutions of \eqref{SP}.
By setting $v=0$ in \eqref{SP}, one can see that
the existence of 
semi-trivial solutions with $v=0$
is reduced to that of positive solutions  
of
the following diffusive logistic equation
\begin{equation}\label{logi}
\Delta u+u(\lambda -b_{1}u)=0
\quad\mbox{in}\ \Omega,
\qquad
u=0\quad\mbox{on}\ \partial\Omega.
\end{equation}
It is well-known that 
the existence and uniqueness of positive solutions
of \eqref{logi} hold true if and only if
$\lambda>\lambda_{1}$,
see e.g. \cite{CC1}.
Hereafter, the positive solution of \eqref{logi}
will be denoted by
$\theta(x;\lambda, b_{1})$.
It is also known that the set
$\{(\lambda, \theta(\,\cdot\,;\lambda, b_{1}))
\in (\lambda_{1},\infty)\times X_{p}\}$
forms a simple curve bifurcating from 
the trivial solution at $\lambda=\lambda_{1}$.
Hence \eqref{SP} has no 
semi-trivial solution if $\lambda\le\lambda_{1}$,
two semi-trivial solutions
$(u,v)=(\theta(\,\cdot\,;\lambda,b_{1}), 0)$
and
$(u,v)=(0, \theta(\,\cdot\,;\lambda,c_{2}))$
if $\lambda>\lambda_{1}$.

\begin{lem}\label{semitrilem}
Let $\lambda>\lambda_{1}$.
If
$\alpha>0$ is sufficiently large,
then
semi-trivial solutions
$(\theta(\,\cdot\,;\lambda,b_{1}), 0)$
and
$(0, \theta(\,\cdot\,;\lambda,c_{2}))$
are linearly stable.
\end{lem}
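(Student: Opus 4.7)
At the semi-trivial state $(u^{*},v^{*})=(\theta,0)$ with $\theta:=\theta(\cdot;\lambda,b_{1})$, formula \eqref{linop} turns the linearized eigenvalue problem \eqref{eg2} into a block triangular system:
\begin{equation*}
\begin{cases}
-\Delta[\phi+\alpha\theta\psi]-(\lambda-2b_{1}\theta)\phi+c_{1}\theta\psi=\mu\phi &\mbox{in}\ \Omega,\\
-\Delta[(1+\alpha\theta)\psi]-(\lambda-b_{2}\theta)\psi=\mu\psi &\mbox{in}\ \Omega,
\end{cases}
\end{equation*}
with $\phi=\psi=0$ on $\partial\Omega$. Because the second equation is decoupled from $\phi$, the spectrum of $L(\lambda,\theta,0)$ splits as $\sigma(L_{1})\cup\sigma(L_{2})$, where $L_{1}\phi:=-\Delta\phi-(\lambda-2b_{1}\theta)\phi$ (self-adjoint on $L^{2}(\Omega)$) and $L_{2}\psi:=-\Delta[(1+\alpha\theta)\psi]-(\lambda-b_{2}\theta)\psi$ (whose eigenvalue problem, after the substitution $\Psi=(1+\alpha\theta)\psi$, takes the form \eqref{eg3} and hence has real spectrum). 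It therefore suffices to show $\mu_{1}(L_{1})>0$ and $\mu_{1}(L_{2})>0$. The first step is painless and independent of $\alpha$: since $\theta>0$ solves $(-\Delta-(\lambda-b_{1}\theta))\theta=0$, the Krein-Rutman theorem yields $\mu_{1}(-\lambda+b_{1}\theta,1)=0$, and because $-\lambda+b_{1}\theta<(\not\equiv)\,-\lambda+2b_{1}\theta$ in $\Omega$, Lemma \ref{monolem}(i) gives $\mu_{1}(L_{1})=\mu_{1}(-\lambda+2b_{1}\theta,1)>0$.

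The positivity of $\mu_{1}(L_{2})$ is the step where the hypothesis $\alpha\gg 1$ enters. The same change of variable $\Psi:=(1+\alpha\theta)\psi$ converts $L_{2}\psi=\mu\psi$ to a problem of the form \eqref{eg3},
\begin{equation*}
-\Delta\Psi+q_{\alpha}(x)\Psi=\mu\,r_{\alpha}(x)\Psi\quad\mbox{in}\ \Omega,\qquad \Psi=0\quad\mbox{on}\ \partial\Omega,
\end{equation*}
with $q_{\alpha}:=(b_{2}\theta-\lambda)/(1+\alpha\theta)$ and $r_{\alpha}:=1/(1+\alpha\theta)>0$. By the variational characterization in \eqref{eg3} and the positivity of $r_{\alpha}$, the sign of $\mu_{1}(L_{2})$ coincides with that of $\nu_{1}(\alpha):=\mu_{1}(q_{\alpha},1)$, the principal Dirichlet eigenvalue of $-\Delta+q_{\alpha}$. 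Two features of $q_{\alpha}$ drive the analysis: $\|q_{\alpha}\|_{L^{\infty}(\Omega)}$ stays bounded as $\alpha\to\infty$, and $\|q_{\alpha}\|_{L^{\infty}(K)}\to 0$ for every compact $K\subset\Omega$ (because $\theta>0$ on $K$), while $q_{\alpha}\to -\lambda$ in the boundary layer where $\theta$ vanishes. To handle this I would fix $\varepsilon>0$, set $K_{\varepsilon}:=\{x\in\Omega:\mathrm{dist}(x,\partial\Omega)\ge\varepsilon\}$, and combine the Poincar\'e inequality on $K_{\varepsilon}$ with the Hardy inequality $\int_{\Omega\setminus K_{\varepsilon}}\Psi^{2}\le C_{H}\varepsilon^{2}\int_{\Omega}|\nabla\Psi|^{2}$ for $\Psi\in H^{1}_{0}(\Omega)$ to obtain
\begin{equation*}
\int_{\Omega}(|\nabla\Psi|^{2}+q_{\alpha}\Psi^{2})\ge\bigl(1-C_{P}\|q_{\alpha}\|_{L^{\infty}(K_{\varepsilon})}-MC_{H}\varepsilon^{2}\bigr)\int_{\Omega}|\nabla\Psi|^{2},
\end{equation*}
where $M$ is a uniform $L^{\infty}$ bound on $q_{\alpha}$ and $C_{P}$ is the Poincar\'e constant of $\Omega$. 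Choosing $\varepsilon$ small first, and then $\alpha$ large, makes the bracket strictly positive, so $\nu_{1}(\alpha)>0$ and consequently $\mu_{1}(L_{2})>0$.

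The main obstacle is precisely the last step: $q_{\alpha}$ does not converge to $0$ uniformly, remaining close to $-\lambda$ inside a boundary layer of width $O(1/\alpha)$, and a naive comparison with the limit problem $-\Delta\Psi=\nu\Psi$ fails on that layer. The saving grace is that this bad region shrinks with $\alpha$ and is absorbed into an arbitrarily small fraction of the Dirichlet energy by the Hardy inequality. The assertion for the companion semi-trivial solution $(0,\theta(\cdot;\lambda,c_{2}))$ follows by the same argument after interchanging the roles of $(u,\phi,b_{1},c_{1})$ and $(v,\psi,c_{2},b_{2})$.
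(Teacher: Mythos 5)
Your proposal is correct and follows the paper's skeleton up to the decisive step, but it handles that step by a genuinely different mechanism. Like the paper, you exploit the triangular structure of the linearization at $(\theta_\lambda,0)$ (the paper reaches it through the substitution $h=\phi-\psi$, $k=(1+\alpha\theta_\lambda)\psi$, you read it off directly from \eqref{linop}), and your treatment of the $\phi$-block via the Krein--Rutman theorem and Lemma \ref{monolem}(i) is exactly the paper's. For the $\psi$-block, where the hypothesis $\alpha\gg 1$ enters, the paper rescales the weighted eigenvalue by $1/\alpha$, i.e.\ studies $\sigma_1(\alpha)=\mu_1\bigl(\tfrac{b_2\theta_\lambda-\lambda}{1+\alpha\theta_\lambda},\tfrac{\alpha}{1+\alpha\theta_\lambda}\bigr)$, and deduces positivity for large $\alpha$ from Dini's theorem on compact subsets combined with the monotonicity of $\mu_1$ in the weight, comparing with $\mu_1(0,\zeta\theta_\lambda^{-1})>0$ for a cutoff function $\zeta$; you instead note that, since the weight $r_\alpha=1/(1+\alpha\theta_\lambda)$ is bounded between positive constants, the sign of the weighted principal eigenvalue agrees with that of the unweighted $\mu_1(q_\alpha,1)$, and you prove the latter positive by a direct quadratic-form estimate: $q_\alpha\to 0$ uniformly on the compact core, while on the $O(\varepsilon)$ boundary layer the $O(1)$ potential is absorbed via the Hardy inequality. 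Your route is more quantitative and avoids the cutoff/limit comparison of weighted eigenvalues (arguably the least transparent point of the paper's argument), at the price of invoking Hardy's inequality for $H^1_0(\Omega)$, which is available here since $\partial\Omega$ is smooth (and trivially in the interval case). Two harmless imprecisions: only the inclusion of the point spectrum of the full triangular linearization into $\sigma_p(L_1)\cup\sigma_p(L_2)$ is actually needed (the reverse inclusion, which would require solving the first equation for $\phi$ given $\psi$, is not), and your ``Poincar\'e inequality on $K_\varepsilon$'' is really the Poincar\'e inequality on $\Omega$ applied after enlarging the domain of integration, as your constant $C_P$ already indicates.
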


\begin{proof}
It obviously suffices to show the
linearized stability of 
$(\theta(\,\cdot\,;\lambda,b_{1}), 0)$.
To avoid complications, 
$\theta(\,\cdot\,;\lambda,b_{1})$ will be abbreviated as
$\theta_{\lambda}$.
Setting $(u,v)=(\theta_{\lambda}, 0)$ in \eqref{linop},
we see that
$$
L(\lambda, \theta_{\lambda}, 0)=
-\Delta
\biggl(
\biggl[
\begin{array}{ll}
1 & \alpha\theta_{\lambda}\\
0 & 1+\alpha\theta_{\lambda}
\end{array}
\biggr]
\biggl[
\begin{array}{c}
\phi\\
\psi
\end{array}
\biggr]
\biggr)
-
\biggl[
\begin{array}{ll}
\lambda -2b_{1}\theta_{\lambda} & -c_{1}\theta_{\lambda }\\
0 & \lambda -b_{2}\theta_{\lambda }
\end{array}
\biggr].
$$
Then the linearized eigenvalue problem \eqref{eg2} around
$(u^{*},v^{*})=(\theta_{\lambda}, 0)$ is reduced to
\begin{equation}\label{quasi}
\begin{cases}
\Delta (\phi +\alpha\theta_{\lambda }\psi )
+(\lambda -2b_{1}\theta_{\lambda })\phi
-c_{1}\theta_{\lambda}\psi+\mu\phi=0
\quad&\mbox{in}\ \Omega,\\
\Delta [(1+\alpha\theta_{\lambda})\psi]
+(\lambda -b_{2}\theta_{\lambda})\psi+\mu\psi=0
\quad&\mbox{in}\ \Omega,\\
\phi=\psi=0
\quad&\mbox{on}\ \partial\Omega.
\end{cases}
\end{equation}
By employing the change of variables 
$$
h(x):=\phi (x)-\psi(x),\qquad
k(x):=(1+\alpha\theta_{\lambda}(x))\psi(x),
$$
we transform \eqref{quasi} to
\begin{equation}\label{semi}
\begin{cases}
\Delta h+\lambda h-2b_{1}\theta_{\lambda}h-
\dfrac{(2b_{1}+c_{1}-b_{2})\theta_{\lambda}}
{1+\alpha\theta_{\lambda}}k
+\mu h=0
\quad&\mbox{in}\ \Omega,\\
\Delta k+\dfrac{\lambda -b_{2}\theta_{\lambda}}
{1+\alpha\theta_{\lambda }}k
+\dfrac{\mu}{1+\alpha\theta_{\lambda}}k=0
\quad&\mbox{in}\ \Omega,\\
h=k=0\quad&\mbox{on}\ \partial\Omega.
\end{cases}
\end{equation}
It will be shown that
the principal eigenvalue $\mu_{*}$ of 
\eqref{semi}
(or equivalently \eqref{quasi})
is positive.
In the case where $k=0$, 
\eqref{semi} is reduced to
$$
\begin{cases}
-\Delta h+(2b_{1}\theta_{\lambda}-\lambda )h=\mu h
\quad&\mbox{in}\ \Omega,\\
h=0\quad&\mbox{on}\ \partial\Omega.
\end{cases}
$$
It is possible to check that
the least eigenvalue
$\mu =\mu_{1}(2b_{1}\theta_{\lambda}-\lambda,1)$
is positive.
Indeed, it follows from \eqref{logi} that
$\theta_{\lambda}$ satisfies
$$
\begin{cases}
-\Delta\theta_{\lambda}+(b_{1}\theta_{\lambda}-\lambda )\theta_{\lambda}=0,
\quad\theta_{\lambda }>0\quad&\mbox{in}\ \Omega,\\
\theta_{\lambda}=0\quad&\mbox{on}\ \partial\Omega.
\end{cases}
$$
This fact with the Krein-Rutman theorem implies
$\mu_{1}(b_{1}\theta_{\lambda}-\lambda, 1)=0$.
Then (i) of Lemma \ref{monolem}
leads to
$$\mu_{1}(2b_{1}\theta_{\lambda}-\lambda,1)>
\mu_{1}(b_{1}\theta_{\lambda}-\lambda,1)=0.$$
Therefore, we deduce that,
if $h\neq 0$ and $k=0$ in \eqref{semi},
then $\mu>0$.

In the other case where $k\neq 0$,
we focus on the second equation of \eqref{semi}
to know that
$$
\mu\ge
\mu_{1}\biggl(
\dfrac{b_{2}\theta_{\lambda}-\lambda}{1+\alpha\theta_{\lambda}},
\dfrac{1}{1+\alpha\theta_{\lambda}}\biggr).
$$
To show the positivity of the right-hand side,
we employ the following scaling
$$
\sigma_{1}(\alpha ):=
\dfrac{1}{\alpha}
\mu_{1}\biggl(
\dfrac{b_{2}\theta_{\lambda}-\lambda}{1+\alpha\theta_{\lambda}},
\dfrac{1}{1+\alpha\theta_{\lambda}}\biggr)
=\mu_{1}
\biggl(
\dfrac{b_{2}\theta_{\lambda}-\lambda}{1+\alpha\theta_{\lambda}},
\dfrac{\alpha}{1+\alpha\theta_{\lambda}}\biggr),
$$
and
show $\sigma_{1} (\alpha )>0$ if
$\alpha >0$ is sufficiently large.
By virtue of the Dini theorem, one can see that
$$
\lim_{\alpha\to\infty}
\dfrac{b_{2}\theta_{\lambda}-\lambda}{1+\alpha\theta_{\lambda}}=0
\quad\mbox{and}\quad
\lim_{\alpha\to\infty}
\dfrac{\alpha}{1+\alpha\theta_{\lambda}}=\dfrac{1}{\theta_{\lambda}}
$$
uniformly in any compact subset $\Omega'$ of $\Omega$.
Then (ii) of Lemma \ref{monolem} implies that
$$
\liminf_{\alpha\to\infty}\sigma_{1}(\alpha )
>
\mu_{1}\biggl(0, \zeta (x)\dfrac{1}{\theta_{\lambda}}\biggr),
$$ 
where $\zeta (x)$ represents a smooth function satisfying
$0\le\zeta (x)\le 1$
for all $x\in\Omega$
and
$\zeta (x)=1$
for all $x\in\Omega'$
and
$\mbox{supp}\,\zeta
\subset\Omega$
(with
$\mbox{dist}\,(\mbox{supp}\,\zeta, \partial\Omega )>0$).
Since $\mu_{1}(0, \zeta\theta_{\lambda}^{-1})$
is the least eigenvalue of 
$$
-\Delta\phi =\mu \zeta(x)\dfrac{1}{\theta_{\lambda}}\phi
\quad\mbox{in}\ \Omega,\qquad
\phi=0\quad\mbox{on}\ \partial\Omega,
$$
then $\mu_{1}(0, \zeta\theta_{\lambda}^{-1})>0$.
Then we know that
$\sigma_{1}(\alpha)>0$ if $\alpha>0$ is sufficiently large.
That is to say,
if $k\neq 0$ in \eqref{semi} and
$\alpha>0$ is sufficiently large,
then $\mu >0$.

Consequently, we can conclude that
the principal eigenvalue $\mu_{*}$ of \eqref{semi}
(or equivalently \eqref{quasi}) is positive if $\alpha>0$
is sufficiently large.
The proof of Lemma \ref{semitrilem} is accomplished.
\end{proof}

\section{Morse index of small coexistence states}
This section is devoted to the stability analysis for
positive steady-states of small coexistence type
on the branch $\mathcal{C}_{\alpha, \varLambda}$
(expressed as \eqref{bif1}).
Our goal of this section is to obtain the Morse index
of each solution on $\mathcal{C}_{\alpha, \varLambda}$.
In the expression of $(\lambda, u(\,\cdot, \lambda, \alpha ), 
v(\,\cdot\,,\lambda, \alpha ))
\in \mathcal{C}_{\alpha, \varLambda}$,
we employ the change of variable
$$
\varepsilon:=\dfrac{1}{\alpha}
$$
to introduce
$
(u_{\varepsilon}(\lambda ), v_{\varepsilon}(\lambda )):=
(u(\,\cdot, \lambda, \varepsilon^{-1} ), 
v(\,\cdot\,,\lambda, \varepsilon^{-1} ))$
and 
$$
(U_{\varepsilon}(\lambda ), V_{\varepsilon}(\lambda )):=
\dfrac{1}{\varepsilon }(u_{\varepsilon }(\lambda ), 
v_{\varepsilon }(\lambda )).
$$
Substituting $(u^{*}, v^{*})=(u_{\varepsilon }(\lambda ), 
v_{\varepsilon }(\lambda ))$
into \eqref{linop}, we see that
\begin{equation}\label{31eigen}
\begin{split}
L(\lambda, u_{\varepsilon }(\lambda ), 
v_{\varepsilon }(\lambda ))
\biggl[
\begin{array}{c}
\phi\\
\psi
\end{array}
\biggr]
=
&-\Delta
\biggl(
\bigg[
\begin{array}{ll}
1+V_{\varepsilon }(\lambda ) & U_{\varepsilon }(\lambda )\\
V_{\varepsilon }(\lambda ) & 1+U_{\varepsilon }(\lambda )
\end{array}
\biggr]
\biggl[
\begin{array}{c}
\phi\\
\psi
\end{array}
\biggr]
\biggr)
-
\lambda 
\biggl[
\begin{array}{c}
\phi\\
\psi
\end{array}
\biggr]\\
&+\varepsilon
\biggl[
\begin{array}{ll}
2b_{1}U_{\varepsilon }(\lambda )+c_{1}V_{\varepsilon }(\lambda ) & c_{1}U_{\varepsilon }(\lambda )\\
b_{2}V_{\varepsilon }(\lambda ) & b_{2}U_{\varepsilon }(\lambda )+2c_{2}V_{\varepsilon }(\lambda )
\end{array}
\biggr]
\biggl[
\begin{array}{c}
\phi\\
\psi
\end{array}
\biggr].
\end{split}
\end{equation}
Then the linearized eigenvalue problem 
\eqref{eg2} around 
$(u^{*}, v^{*})=(u_{\varepsilon}(\lambda ), 
v_{\varepsilon }(\lambda ))$ is reduced to
the following Dirichlet problem:
\begin{equation}\label{cegn}
\begin{cases}
\Delta
[(1+V_{\varepsilon }(\lambda ))\phi +U_{\varepsilon }(\lambda )\psi]
+\lambda \phi\\
-\varepsilon
\left[
\{2b_{1}U_{\varepsilon }(\lambda )+c_{1}V_{\varepsilon }(\lambda )\}\phi
+c_{1}U_{\varepsilon }(\lambda )\psi\right]
+\mu\phi=0
\quad&\mbox{in}\ \Omega,\\
\Delta
[V_{\varepsilon }(\lambda )\phi+(1+U_{\varepsilon }(\lambda ))\psi]
+\lambda \psi\\
-\varepsilon
\left[b_{2}V_{\varepsilon }(\lambda )\phi+
\{b_{2}U_{\varepsilon }(\lambda )+2c_{2}V_{\varepsilon }(\lambda )\}\psi\right]
+\mu\psi=0
\quad&\mbox{in}\ \Omega,\\
\phi=\psi=0
\quad&\mbox{on}\ \partial\Omega.
\end{cases}
\end{equation}
In view of \eqref{limc}, we recall that
$(U_{\varepsilon }(\lambda ), V_{\varepsilon }(\lambda ))\to
(U(\lambda ),U(\lambda ))$
in $\boldsymbol{X}_{p}$ as $\varepsilon\to 0$,
where we abbreviate
$U(\lambda )=U(\,\cdot\,,\lambda)$.
Then setting $\varepsilon\to 0$
in \eqref{cegn}, we obtain the limiting
eigenvalue problem as follows:
\begin{equation}\label{limeg}
\begin{cases}
\Delta
[(1+U(\lambda))\phi +U(\lambda )\psi]
+\lambda \phi
+\mu\phi=0
\quad&\mbox{in}\ \Omega,\\
\Delta
[U(\lambda )\phi+(1+U(\lambda ))\psi]
+\lambda \psi
+\mu\psi=0
\quad&\mbox{in}\ \Omega,\\
\phi=\psi=0
\quad&\mbox{on}\ \partial\Omega.
\end{cases}
\end{equation}
To know the set of all the eigenvalues of \eqref{limeg},
we prepare the following lemma:
\begin{lem}\label{isolem}
Suppose that $r(x)\,(\not\equiv 0)$ 
is a nonnegative and H\"older continuous
function.
Let $U(\lambda )$ be the positive solution of \eqref{LS1}.
Then it holds that
$$
\mu_{1}\biggl(-\dfrac{\lambda}{1+U(\lambda )},
r\biggr)=0
\quad\mbox{for any}\ \lambda>\lambda_{1}.
$$
\end{lem}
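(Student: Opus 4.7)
The plan is to exhibit an explicit positive eigenfunction of the weighted eigenvalue problem \eqref{eg3} (with $q = -\lambda/(1+U(\lambda))$ and weight $r$) corresponding to the eigenvalue $\mu = 0$, and then invoke the Krein--Rutman characterization of the principal eigenvalue as the unique eigenvalue admitting a positive eigenfunction.

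First, I would rewrite \eqref{LS1} in the equivalent form
$$-\Delta[(1+U)U] = \lambda U \quad \text{in } \Omega,$$
and set $W := U(1+U) = U + U^{2}$. Since $U \in C^{2}(\overline{\Omega})$ satisfies $U > 0$ in $\Omega$ and $U = 0$ on $\partial\Omega$, the function $W$ belongs to $C^{2}(\overline{\Omega})$ and enjoys $W > 0$ in $\Omega$ with $W = 0$ on $\partial\Omega$.

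The key observation is the algebraic identity
$$\dfrac{\lambda}{1+U}\,W \;=\; \dfrac{\lambda\,U(1+U)}{1+U} \;=\; \lambda U,$$
which combined with $-\Delta W = \lambda U$ yields
$$-\Delta W \;-\; \dfrac{\lambda}{1+U}\,W \;=\; 0 \quad \text{in }\Omega, \qquad W = 0 \quad \text{on }\partial\Omega.$$
Equivalently, $(\mu,\phi) = (0, W)$ solves \eqref{eg3} with $q = -\lambda/(1+U)$ and any admissible weight $r$, since the weight enters only through the product $\mu r(x)\phi$, which vanishes when $\mu = 0$. Thus $\mu = 0$ is an eigenvalue of \eqref{eg3} possessing the strictly positive eigenfunction $W$.

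To conclude, I would appeal to the Krein--Rutman theorem (applicable under the standing hypotheses on $q$ and $r$): the principal eigenvalue $\mu_{1}(q,r)$ is simple and is uniquely characterized as the eigenvalue admitting a positive eigenfunction in $\Omega$. Hence $\mu_{1}\bigl(-\lambda/(1+U(\lambda)),\,r\bigr) = 0$ for every $\lambda > \lambda_{1}$, as claimed. The only genuinely clever step is spotting the ansatz $\phi = U(1+U)$, which is suggested by the structure of the nonlinear diffusion $(1+U)U$ inside the Laplacian in \eqref{LS1}; once this substitution is made, the remainder is a one-line verification, so no serious obstacle arises.
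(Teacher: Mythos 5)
Your argument is correct and coincides with the paper's own proof: the paper likewise sets $Z(\lambda)=(1+U(\lambda))U(\lambda)$, notes from \eqref{LS1} that it solves $-\Delta Z-\frac{\lambda}{1+U(\lambda)}Z=0$ with $Z>0$ in $\Omega$ and $Z=0$ on $\partial\Omega$, and concludes via the Krein--Rutman characterization that the principal eigenvalue is zero for any admissible weight $r$. No substantive differences.
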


\begin{proof}
Setting
$
Z(\lambda )=(1+U(\lambda ))U(\lambda )$
in \eqref{LS1},
one can see that
$Z(\lambda )$ satisfies
\begin{equation}\label{tuika}
-\Delta Z(\lambda )-\dfrac{\lambda }{1+U(\lambda )}Z(\lambda )=0
\quad\mbox{in}\ \Omega,\qquad
Z(\lambda )=0\quad\mbox{on}\ \partial\Omega.
\end{equation}
Thanks to the positivity that $Z(\lambda )>0$ in $\Omega$,
the application of the Krein-Rutman theorem to \eqref{tuika}
implies that
$$
\mu_{1}\biggl(
-\dfrac{\lambda}{1+U(\lambda )},q\biggr)=0
\quad\mbox{for any}\ \lambda>\lambda_{1}.
$$
The proof of Lemma \ref{isolem} is complete.
\end{proof}

\begin{lem}\label{neglem}
Suppose that $\lambda>\lambda_{1}$.
Then, all the eigenvalues of \eqref{limeg} consist of
the union of
$$
\{\lambda_{j}-\lambda\}^{\infty}_{j=1},
\qquad
\biggl\{\mu_{j}\biggl(
-\dfrac{\lambda}{1+2U(\lambda )}, \dfrac{1}{1+2U(\lambda )}\biggr)
\biggr\}^{\infty}_{j=1}.
$$
Furthermore, all the eigenvalues contained in the latter set are positive.
\end{lem}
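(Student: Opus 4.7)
The plan is to decouple the $2\times 2$ system in \eqref{limeg} by a linear change of unknowns and reduce it to two scalar weighted eigenvalue problems whose spectra give exactly the two sets in the statement.

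First I would form the combinations
$$h:=\phi-\psi,\qquad s:=\phi+\psi,$$
and compute the sum and difference of the two equations in \eqref{limeg}. The $U$-coefficients cancel in the difference because of the symmetry of the matrix multiplying $(\phi,\psi)$ under swapping of off-diagonal entries with the opposite sign, so one obtains
$$\Delta h+(\lambda+\mu)h=0,\qquad \Delta\bigl[(1+2U(\lambda))s\bigr]+(\lambda+\mu)s=0$$
with Dirichlet boundary conditions. From the first equation, if $h\not\equiv 0$ then $\lambda+\mu=\lambda_j$ for some $j\geq 1$, giving the first set $\{\lambda_j-\lambda\}_{j=1}^{\infty}$. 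For the second, I would substitute $k:=(1+2U(\lambda))s$ to rewrite it as
$$-\Delta k-\dfrac{\lambda}{1+2U(\lambda)}\,k=\mu\,\dfrac{1}{1+2U(\lambda)}\,k\quad\mbox{in}\ \Omega,\qquad k=0\quad\mbox{on}\ \partial\Omega,$$
so $k\not\equiv 0$ forces $\mu$ to lie in $\{\mu_j(-\lambda/(1+2U(\lambda)),1/(1+2U(\lambda)))\}_{j=1}^{\infty}$. Conversely, any such $\mu$ produces an eigenfunction of \eqref{limeg} by solving back for $(\phi,\psi)$ from $(h,k)$, so the union exhausts the spectrum.

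For the positivity assertion, I would apply Lemma \ref{isolem} with $r=1/(1+2U(\lambda))$ to obtain
$$\mu_{1}\!\left(-\dfrac{\lambda}{1+U(\lambda)},\,\dfrac{1}{1+2U(\lambda)}\right)=0,$$
and then use the strict monotonicity (i) of Lemma \ref{monolem}. Since $U(\lambda)>0$ in $\Omega$ gives
$$-\dfrac{\lambda}{1+2U(\lambda)}>-\dfrac{\lambda}{1+U(\lambda)}\quad\mbox{in}\ \Omega,$$
strict monotonicity yields
$$\mu_{1}\!\left(-\dfrac{\lambda}{1+2U(\lambda)},\,\dfrac{1}{1+2U(\lambda)}\right)>\mu_{1}\!\left(-\dfrac{\lambda}{1+U(\lambda)},\,\dfrac{1}{1+2U(\lambda)}\right)=0,$$
and since $\mu_1<\mu_2\le\mu_3\le\cdots$, every $\mu_j$ in the latter set is positive.

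The step I expect to require a little extra care is the conversion between the original eigenvalue problem and the decoupled pair: I need to verify that solvability of the $(h,k)$ system really is equivalent to solvability of the original $(\phi,\psi)$ system (in particular that multiplicities match, and that the two families of eigenvalues are genuinely disjoint scenarios rather than overlapping in a way that could duplicate or lose eigenspaces). This should follow from the fact that the change of variables $(\phi,\psi)\mapsto(h,k)$ is a smooth isomorphism on $\boldsymbol{X}_p$ once $1+2U(\lambda)>0$ is used, but it is worth stating explicitly before invoking Lemma \ref{isolem} and the monotonicity lemma to finish.
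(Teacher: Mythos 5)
Your proposal is correct and follows essentially the same route as the paper: the same change of variables $h=\phi-\psi$, $k=(1+2U(\lambda))(\phi+\psi)$ decoupling \eqref{limeg} into \eqref{1steg} and \eqref{2ndeg}, and the same positivity argument via Lemma \ref{isolem} with $r=1/(1+2U(\lambda))$ combined with Lemma \ref{monolem}(i). Your extra remark about verifying that the change of variables is an isomorphism (so the two spectra exhaust that of \eqref{limeg}) is a sensible point of care, but it does not constitute a different method.
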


\begin{proof}
By the change of variables
$$
h(x):=\phi(x)-\psi (x)
\quad\mbox{and}\quad
k(x):=(1+2U(\lambda ))(\phi (x)+\psi (x))
$$
we reduce \eqref{limeg}
to a pair of eigenvalue problems for single equations
with separate variables:
\begin{equation}\label{1steg}
-\Delta h-\lambda h=\mu h
\quad\mbox{in}\ \Omega,\qquad
h=0\quad\mbox{on}\ \partial\Omega
\end{equation}
and
\begin{equation}\label{2ndeg}
-\Delta k-\dfrac{\lambda }{1+2U(\lambda )}k=
\dfrac{\mu }{1+2U(\lambda )}k
\quad\mbox{in}\ \Omega,\qquad
k=0\quad\mbox{on}\ \partial\Omega.
\end{equation}
Hence the set of eigenvalues of \eqref{limeg} coincides with
the union of the sets of eigenvalues of \eqref{1steg} and \eqref{2ndeg}.

It is obvious that \eqref{1steg} admits nontrivial solutions
if and only if $\lambda+\mu=\lambda_{j}$.
This fact means that all the eigenvalues of \eqref{1steg}
are arranged as
$
\{\lambda_{j}-\lambda\}^{\infty}_{j=1}$.

Then,
by the combination of 
Lemma \ref{isolem} with
$r=1/(1+2U(\lambda ))$
and
(i) of Lemma \ref{monolem},
we deduce that
$$
\mu_{1}\biggl(
-\dfrac{\lambda}{1+2U(\lambda )},
\dfrac{1}{1+2U(\lambda )}
\biggr)
>
\mu_{1}\biggl(
-\dfrac{\lambda}{1+U(\lambda )},
\dfrac{1}{1+2U(\lambda )}
\biggr)=0
\quad\mbox{for any}\ \lambda>\lambda_{1}.
$$
This fact implies that,
if $k$ is not trivial,
then $\mu$ is necessarily positive.

Therefore, the negative eigenvalues of
\eqref{limeg} coincide with the negative elements of the set
$\{\lambda_{j}-\lambda\}^{\infty}_{j=1}$.
Hence we obtain the assertion of Lemma \ref{neglem}.
\end{proof}
Next we consider the Morse index of
any solutions on the bifurcation curve
$\mathcal{C}_{\alpha, \varLambda}$ introduced by \eqref{bif1}.
The following result asserts that
positive solutions with the Morse index $-1$ 
bifurcate from the trivial solution at $\lambda=\lambda_{1}$, 
and as $\lambda$ increases along $\mathcal{C}_{\alpha, \varLambda}$, 
the Morse index of positive solutions decreases by one 
for each time $\lambda$ exceeds the bifurcation points 
$\beta_{2,\alpha}<\beta_{3,\alpha}<\cdots<\beta_{\overline{j}, \alpha}$.
Hereafter the Morse index of 
$(\lambda, u(\,\cdot\,,\lambda, \alpha ), v(\,\cdot\,,\lambda, \alpha ))\in
\mathcal{C}_{\alpha, \varLambda}$
will be denoted by
$$i(\lambda, u(\,\cdot\,,\lambda, \alpha ), v(\,\cdot\,,\lambda, \alpha )),$$
namely,
$i(u(\,\cdot\,,\lambda, \alpha ), v(\,\cdot\,,\lambda, \alpha ))$
denotes the number of negative eigenvalues of
\eqref{eg2}
with $(u^{*}, v^{*})=(u(\,\cdot\,,\lambda, \alpha ), 
v(\,\cdot\,,\lambda, \alpha ))$.
\begin{thm}\label{mainthm1}
Let
$\overline{j}:=\max\{j\in\mathbb{N}\,:\,\lambda_{j}<\varLambda\,\}$.
Suppose that $\lambda_{1},\lambda_{2},\cdots,
\lambda_{\overline{j}}$ in \eqref{lambdaj}
are simple eigenvalues.
Then there exists a large $\overline{\alpha}=\overline{\alpha}
(\varLambda )$ such that,
if $\alpha>\overline{\alpha}$ and
$\lambda\in (\beta_{j, \alpha}, \beta_{j+1, \alpha}]\cap 
(\lambda_{1},\varLambda]$
with $\beta_{1,\alpha}:=\lambda_{1}$, then
$i(\lambda,
u(\,\cdot\,,\lambda, \alpha ), v(\,\cdot\,,\lambda, \alpha ))=j$.
\end{thm}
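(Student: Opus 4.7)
The plan is to combine spectral convergence of the linearized problem \eqref{cegn} to its limit \eqref{limeg} with the pitchfork bifurcation structure of \cite{IKS} to locate all changes of the Morse index at the points $\beta_{j,\alpha}$.

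First I would collect the spectral picture in the limit. By Lemma \ref{neglem}, the spectrum of \eqref{limeg} is the disjoint union
\[
\{\lambda_i - \lambda\}_{i=1}^{\infty} \cup \biggl\{\mu_i\Bigl(-\tfrac{\lambda}{1+2U(\lambda)}, \tfrac{1}{1+2U(\lambda)}\Bigr)\biggr\}_{i=1}^{\infty},
\]
the second family being strictly positive. Hence for $\lambda \in (\lambda_j, \lambda_{j+1})$ the negative part consists precisely of the eigenvalues $\lambda_1 - \lambda, \ldots, \lambda_j - \lambda$, which are simple and mutually distinct by the hypothesis on $\lambda_1, \ldots, \lambda_{\overline{j}}$. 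All other elements of the spectrum are bounded uniformly away from zero on any compact subset of $(\lambda_1, \varLambda] \setminus \{\lambda_1, \ldots, \lambda_{\overline{j}}\}$.

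Second, I would establish a spectral perturbation statement. By \eqref{limc} the matrix coefficients of \eqref{cegn} converge in $X_p$, hence in $C(\overline{\Omega})$ since $p > N$, to those of \eqref{limeg} as $\varepsilon = 1/\alpha \to 0$. Both operators, viewed on $\boldsymbol{Y}_p$ with domain $\boldsymbol{X}_p$, have compact resolvent, so Kato-type perturbation theory gives generalized resolvent convergence and hence spectral convergence: each isolated eigenvalue of \eqref{limeg}, counted with algebraic multiplicity, is the limit of equally many eigenvalues of \eqref{cegn}, while no eigenvalue of \eqref{cegn} can accumulate outside the spectrum of \eqref{limeg}. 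Combined with the preceding paragraph, this shows that for every compact set $K \subset (\lambda_1, \varLambda] \setminus \{\lambda_1, \ldots, \lambda_{\overline{j}}\}$ and every $\alpha$ large enough, the operator in \eqref{cegn} has exactly $j$ eigenvalues of negative real part whenever $\lambda \in K \cap (\lambda_j, \lambda_{j+1})$.

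Third, I would handle the transitions near $\lambda = \lambda_j$ by invoking the bifurcation analysis of \cite{IKS}. The pitchfork branches $\mathcal{S}^{\pm}_{j,\alpha, \varLambda}$ emanate from $\mathcal{C}_{\alpha, \varLambda}$ at $\lambda = \beta_{j,\alpha}$, with $\beta_{j,\alpha} \to \lambda_j$ as $\alpha \to \infty$, and by the Crandall--Rabinowitz theorem this bifurcation is produced by a simple real eigenvalue of $L(\lambda, u_\varepsilon(\lambda), v_\varepsilon(\lambda))$ crossing zero transversally as $\lambda$ traverses $\beta_{j,\alpha}$. Consequently the Morse index is locally constant on intervals free of the $\beta_{j,\alpha}$ and jumps by exactly one at each $\beta_{j,\alpha}$. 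For $\lambda$ slightly above $\lambda_1$, the limit problem \eqref{limeg} has the single negative eigenvalue $\lambda_1 - \lambda$, so the perturbed Morse index equals $1$ on $(\lambda_1, \beta_{2,\alpha}] = (\beta_{1,\alpha}, \beta_{2,\alpha}]$. Induction on $j$ then forces $i(\lambda, u(\cdot, \lambda, \alpha), v(\cdot, \lambda, \alpha)) = j$ on each interval $(\beta_{j,\alpha}, \beta_{j+1,\alpha}] \cap (\lambda_1, \varLambda]$, which is the claim.

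The principal obstacle is non-self-adjointness of \eqref{cegn}: eigenvalues can a priori be complex, and one must rule out an eigenvalue crossing the imaginary axis at a nonzero imaginary value between two $\beta_{j,\alpha}$'s, which would alter the Morse index without corresponding to a bifurcation. The safest route is to mimic the decoupling substitution of Lemma \ref{neglem} at finite $\varepsilon$, setting $h := \phi - \psi$ and $K := (1 + U_\varepsilon(\lambda) + V_\varepsilon(\lambda))(\phi + \psi)$. Adding and subtracting the two equations in \eqref{cegn} converts the system into a pair of scalar Dirichlet problems coupled only through lower-order terms of order $\varepsilon$; the unperturbed scalar problems are self-adjoint with simple negative eigenvalues, so standard finite-dimensional perturbation theory confines each negative eigenvalue of \eqref{cegn} to a small real neighborhood of its limit and rules out Hopf-type crossings when $\alpha$ is sufficiently large.
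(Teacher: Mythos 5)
Your overall strategy (limit spectrum from Lemma \ref{neglem}, perturbation of the eigenvalues as $\varepsilon=1/\alpha\to 0$, realness via conjugate pairs, index changes located at the $\beta_{j,\alpha}$, compactness in $\lambda$) is the same skeleton as the paper's proof, which carries out the perturbation step by an implicit function theorem for $(h,k,\mu)$ after the same decoupling change of variables. The genuine gap is in your third step, the assertion that the Morse index is locally constant on intervals free of the $\beta_{j,\alpha}$ and jumps by exactly one at each $\beta_{j,\alpha}$. A zero eigenvalue of $L(\lambda,u_{\varepsilon}(\lambda),v_{\varepsilon}(\lambda))$ need not produce a bifurcation, so knowing from \cite{IKS} that pitchforks occur at the $\beta_{j,\alpha}$ does not by itself exclude other zero crossings of the near-zero eigenvalue; and the Crandall--Rabinowitz linearized-stability statement you invoke is local in $\lambda$ around $\beta_{j,\alpha}$ for each fixed $\alpha$, on a neighborhood whose size may shrink as $\alpha\to\infty$. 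Since $\beta_{j,\alpha}\to\lambda_{j}$, your Kato-type compact-set argument only controls $\lambda$ outside fixed neighborhoods of the $\lambda_{j}$, and nothing in the proposal bridges the two regimes: in the annular region around $\lambda_{j}$ lying inside the fixed neighborhood but outside the $\alpha$-dependent Crandall--Rabinowitz neighborhood, the eigenvalue continued from $\lambda_{j}-\lambda$ is merely small, and you have not excluded that it vanishes again there. The same issue appears, untreated, for $\lambda$ slightly above $\lambda_{1}$, where the limit eigenvalue $\lambda_{1}-\lambda$ is itself close to zero.

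The paper closes exactly this hole quantitatively: the implicit function theorem is applied jointly in $(\lambda,\varepsilon)$ on a ball of fixed radius around $(\lambda_{j},0)$, each continued eigenvalue $\mu_{i}(\lambda,\varepsilon)$ is $C^{1}$ there, the derivative $\partial_{\lambda}\mu_{j}(\lambda_{j},0)=-1$ is computed from \eqref{der}, hence $\partial_{\lambda}\mu_{j}<-1/2$ holds on a fixed neighborhood for all small $\varepsilon$ as in \eqref{bifu}; combined with the fact that the bifurcation point forces $\mu_{j}(\beta_{j}(\varepsilon),\varepsilon)=0$, this monotonicity yields exactly one zero, at $\beta_{j}(\varepsilon)$, and the sign pattern \eqref{pass}. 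A finite covering of $[\lambda_{1}+\tau_{1},\varLambda]$ then produces a single $\overline{\alpha}(\varLambda)$ valid for all $\lambda$, and a separate transversality argument handles $(\lambda_{1},\lambda_{1}+\tau_{1}]$. To repair your proposal you would need to supply this uniform monotonicity (or an equivalent uniform statement) near each $\lambda_{j}$ and near $\lambda_{1}$, and to assemble your compact-set thresholds $\alpha(K)$ into one $\overline{\alpha}(\varLambda)$. A smaller inaccuracy: after the decoupling at finite $\varepsilon$, the coupling of the $k$-equation to $h$ is through $(\lambda+\mu)\,\frac{U_{\varepsilon}(\lambda)-V_{\varepsilon}(\lambda)}{1+U_{\varepsilon}(\lambda)+V_{\varepsilon}(\lambda)}$, which is small only because $U_{\varepsilon}-V_{\varepsilon}\to 0$ by \eqref{limc} (not literally $O(\varepsilon)$) and grows with $|\mu|$, so the ``self-adjoint plus small perturbation'' reduction needs a preliminary localization of the eigenvalues under consideration.
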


\begin{proof}
We study the linearized eigenvalue problem \eqref{eg2}
around $(u^{*},v^{*})=(u_{\varepsilon }(\lambda ),
v_{\varepsilon }(\lambda ))$.
Repeating the argument to get \eqref{31eigen},
we recall that \eqref{eg2} is equivalent to \eqref{cegn}.
By the change of variables
$$
h(x):=\phi(x)-\psi (x)
\quad\mbox{and}\quad
k(x):= (1+2V_{\varepsilon }(\lambda))\phi(x)+
(1+2U_{\varepsilon }(\lambda ))\psi (x),
$$
we reduce \eqref{cegn} to
\begin{equation}\label{Gdef}
\begin{split}
&\Delta
\biggl[
\begin{array}{c}
h\\
k
\end{array}
\biggr]
+
(\lambda +\mu )\biggl[
\begin{array}{ll}
1 & 0\\
\frac{U_{\varepsilon }(\lambda )-V_{\varepsilon }(\lambda )}
{1+U_{\varepsilon }(\lambda ) + V_{\varepsilon }(\lambda )}
&
\frac{1}
{1+U_{\varepsilon }(\lambda ) + V_{\varepsilon }(\lambda )}
\end{array}
\biggr]
\biggl[
\begin{array}{c}
h\\
k
\end{array}
\biggr]
\\
&-\dfrac{\varepsilon}{2(1+U_{\varepsilon }(\lambda ) + V_{\varepsilon }(\lambda ))}
\biggl[
\begin{array}{ll}
2b_{1}U_{\varepsilon }(\lambda )+
(c_{1}-b_{2})V_{\varepsilon }(\lambda )
&
(c_{1}-b_{2})U_{\varepsilon }(\lambda )
-2c_{2}V_{\varepsilon }(\lambda )\\
2b_{1}U_{\varepsilon }(\lambda )+
(c_{1}+b_{2})V_{\varepsilon }(\lambda )
&
(c_{1}+b_{2})U_{\varepsilon }(\lambda )
+2c_{2}V_{\varepsilon }(\lambda )
\end{array}
\biggr]\\
&\times
\biggl[
\begin{array}{ll}
1+2U_{\varepsilon}(\lambda ) & 1\\
-1-2V_{\varepsilon}(\lambda ) & 1
\end{array}
\biggr]
\biggl[
\begin{array}{c}
h\\
k
\end{array}
\biggr]
=
\biggl[
\begin{array}{c}
0\\
0
\end{array}
\biggr]
\end{split}
\end{equation}
with homogeneous Dirichlet boundary conditions 
$h=k=0$ on $\partial\Omega$.
For $(\lambda, h, k, \mu, \varepsilon )\in
(\lambda_{1}, \varLambda )\times\boldsymbol{X}_{p}
\times\mathbb{C}\times\mathbb{R}$,
we define $G( \lambda, h, k, \mu, \varepsilon )\in
\boldsymbol{Y}_{p}$ by the left-hand side of
\eqref{Gdef}.
Furthermore,
we define $H(\lambda, h, k, \mu, \varepsilon )\in
\boldsymbol{Y}_{p}\times\mathbb{R}$ by
$$
H(\lambda, h, k, \mu, \varepsilon ):=
\biggl[
\begin{array}{l}
G(\lambda, h, k, \mu, \varepsilon )\\
\|h\|^{2}_{2}+\|k\|^{2}_{2}-1
\end{array}
\biggr],
$$
where $\|\,\cdot\,\|_{2}$
denotes the usual norm of $L^{2}(\Omega )$.
Since $G(\lambda, h, k, \mu, \varepsilon )$
is linear with respect to $(h,k)$,
then it is reasonable to regard $\boldsymbol{X}_{p}$
and $\boldsymbol{Y}_{p}$
as
\begin{equation}\label{XpYp}
\boldsymbol{X}_{p}=
[W^{2,p}(\Omega: \mathbb{R})\cap W^{1,p}_{0}(\Omega: \mathbb{R})]
\times
[W^{2,p}(\Omega: \mathbb{C})\cap W^{1,p}_{0}(\Omega: \mathbb{C})],
\quad
\boldsymbol{Y}_{p}=L^{p}(\Omega: \mathbb{C})\times L^{p}(\Omega: \mathbb{C})
\end{equation}
in the proof of Theorem \ref{mainthm1}.

Here we recall the eigenvalue problem
\eqref{limeg} at the limit $\varepsilon\to 0$.
It follows from Lemma \ref{neglem} that
all the eigenvalues of 
\eqref{limeg}
with $\lambda=\lambda^{*}\in (\lambda_{1}, \infty)$ 
consist of
$$
\lambda_{1}-\lambda^{*}<\lambda_{2}-\lambda^{*}<\cdots
$$
and
$$
(\,0<\,)\,
\mu_{1}\biggl(
-\dfrac{\lambda^{*}}{1+2U(\lambda^{*})},
\dfrac{1}{1+2U(\lambda^{*})}\biggr)
<
\mu_{2}\biggl(
-\dfrac{\lambda^{*}}{1+2U(\lambda^{*})},
\dfrac{1}{1+2U(\lambda^{*})}\biggr)
<\cdots.
$$
In order to count the negative eigenvalues,
we set
$$
\mu_{i}^{*}:=\lambda_{i}-\lambda^{*}
\quad\mbox{for}\ i\in\mathbb{N}.
$$
In the case where
$\lambda^{*}\in (\lambda_{j}, \lambda_{j+1})$
with some $j\in\mathbb{N}$,
all the negative eigenvalues of \eqref{limeg} 
with $\lambda=\lambda^{*}$ consist of
$\{\mu^{*}_{i}\}^{j}_{i=1}$,
that is,
\begin{equation}\label{reg}
\mu^{*}_{1}<\mu^{*}_{2}<\cdots<\mu^{*}_{j}\,(\,<0<\,)\,
\mu^{*}_{j+1}<\cdots.
\end{equation}
In the other case where
$\lambda^{*}=\lambda_{j}$
with some $j\ge 2$,
all the negative eigenvalues of \eqref{limeg} 
with $\lambda=\lambda^{*}$ consist of
$\{\mu^{*}_{i}\}^{j-1}_{i=1}$,
that is,
\begin{equation}\label{cri}
\mu^{*}_{1}<\mu^{*}_{2}<\cdots<\mu^{*}_{j-1}
<\mu^{*}_{j}=0
<
\mu^{*}_{j+1}<\cdots.
\end{equation}

We 
consider \eqref{1steg}-\eqref{2ndeg}
with $(\lambda, \mu)=(\lambda^{*}, \mu_{i}^{*})$.
Then the corresponding eigenfunctions are obtained as
$$
(h^{*},k^{*})=t(\varPhi_{i},0)
\quad\mbox{with}\ t\neq 0,
$$
where
$\varPhi_{i}$ denotes an $L^{2}$ normalized eigenfunction
of $-\Delta$ with the eigenvalue $\lambda=\lambda_{i}$
under the homogeneous Dirichlet boundary condition,
namely,
$\varPhi_{i}$ satisfies
\eqref{eg} with $\lambda=\lambda_{i}$ and
$\|\varPhi_{i}\|_{2}=1$.
It follows that
\begin{equation}\label{H0}
H(\lambda^{*}, \varPhi_{i}, 0, \mu^{*}_{i}, 0)=0.
\end{equation}
Our strategy is to
construct the zero-level set of $H$
near
$(\lambda^{*}, \varPhi_{i}, 0, \mu^{*}_{i}, 0)
\in \mathbb{R}\times \boldsymbol{X}_{p}\times
\mathbb{C}\times\mathbb{R}$
by the applications of the implicit function theorem.
To do so, we need to check that
$$D_{(h,k,\mu)}H(\lambda^{*}, \varPhi_{i},
0, \mu^{*}_{i}, 0)
\in\mathcal{L}(\boldsymbol{X}_{p}\times\mathbb{C},
\boldsymbol{Y}_{p}\times\mathbb{R})$$
is an isomorphism.
In view of the left-hand side of \eqref{Gdef},
one can verify that
\begin{equation}
\begin{split}
&D_{(h,k,\mu)}H
(\lambda^{*}, \varPhi_{i}, 0, \mu^{*}_{i}, 0)
\left[
\begin{array}{c}
h\\
k\\
\mu
\end{array}
\right]
=
\left[
\begin{array}{l}
D_{(h,k,\mu)}G
(\lambda^{*}, \varPhi_{i}, 0, \mu^{*}_{i}, 0)
\left[
\begin{array}{c}
h\\
k\\
\mu
\end{array}
\right]
\\
2\int_{\Omega}\varPhi_{i}h
\end{array}
\right]\\
&=
\left[
\begin{array}{l}
\Delta\biggl[
\begin{array}{l}
h\\
k
\end{array}
\biggr]
+(\lambda^{*}+\mu^{*}_{i})
\biggl[
\begin{array}{ll}
1 & 0\\
0 & \frac{1}{1+2U(\lambda^{*})}
\end{array}
\biggr]
\biggl[
\begin{array}{l}
h\\
k
\end{array}
\biggr]
+
\mu
\biggl[
\begin{array}{ll}
1 & 0\\
0 & \frac{1}{1+2U(\lambda^{*})}
\end{array}
\biggr]
\biggl[
\begin{array}{l}
\varPhi_{i}\\
0
\end{array}
\biggr]
\\
2\int_{\Omega}\varPhi_{i}h
\end{array}
\right].
\end{split}
\nonumber
\end{equation}
By virtue of $\lambda^{*}+\mu^{*}_{i}=\lambda_{i}$,
we see that
\begin{equation}
D_{(h,k,\mu)}H
(\lambda^{*}, \varPhi_{i}, 0, \mu^{*}_{i}, 0)
\left[
\begin{array}{c}
h\\
k\\
\mu
\end{array}
\right]
=
\left[
\begin{array}{l}
\Delta h+\lambda_{i}h+\mu\varPhi_{i}\\
\Delta k+\frac{\lambda_{i}}{1+2U(\lambda^{*})}k\\
2\int_{\Omega}\varPhi_{i}h
\end{array}
\right].
\nonumber
\end{equation}
In order to verify that
$D_{(h,k,\mu)}H(\lambda^{*}, \varPhi_{i}, 0,
\mu^{*}_{i}, 0)
\in\mathcal{L}(\boldsymbol{X}_{p}\times\mathbb{C},
\boldsymbol{Y}_{p}\times\mathbb{R})$
is an isomorphism,
we suppose that
$D_{(h.k.\mu)}
H(\lambda^{*}, \varPhi_{i}, 0,
\mu^{*}_{i}, 0)(h,k,\mu)^{T}=0$,
that is,
\begin{equation}\label{isosys}
\begin{cases}
-\Delta h-\lambda_{i}h=\mu\varPhi_{i}
\quad&\mbox{in}\ \Omega,\\
-\Delta k-\frac{\lambda_{i}}{1+2U(\lambda^{*})}k=0
\quad&\mbox{in}\ \Omega,\\
\int_{\Omega}\varPhi_{i}h=0,\\
h=k=0\quad&\mbox{on}\ \partial\Omega.
\end{cases}
\end{equation}
Taking the inner product of the first equation with $\varPhi_{i}$, 
we obtain
$\mu=0$.
This fact leads to 
$$
-\Delta h=\lambda_{i}h\quad\mbox{in}\ \Omega,
\qquad h=0\quad\mbox{on}\ \partial\Omega,\qquad
\displaystyle\int_{\Omega}
\varPhi_{i}h=0.
$$
Owing to the assumption that $\lambda_{i}$ is a simple
eigenvalue of \eqref{eg},
the Fredholm alternative theorem yields $h=0$.
It follows from 
Lemma \ref{isolem} and (i) of Lemma \ref{monolem} that
\begin{equation}\label{step}
0=\mu_{1}\biggl(
-\dfrac{\lambda^{*}}{1+U(\lambda^{*})},1\biggr)
<
\mu_{1}\biggl(
-\dfrac{\lambda^{*}}{1+2U(\lambda^{*})},1\biggr).
\end{equation}
Whether in the case $\lambda^{*}\in (\lambda_{j}, \lambda_{j+1})$
with some $j\in\mathbb{N}$ or the case 
$\lambda^{*}=\lambda_{j}$ with some $j\ge 2$,
it follows that $\lambda_{i}\le\lambda^{*}$
for any $i=1,2,\ldots,j$.
Then (i) of Lemma \ref{monolem} with \eqref{step} implies
$$
0
<
\mu_{1}\biggl(
-\dfrac{\lambda^{*}}{1+2U(\lambda^{*})},1\biggr)
\le
\mu_{1}\biggl(
-\dfrac{\lambda_{i}}{1+2U(\lambda^{*})},1\biggr)
\quad
\mbox{for}\ i=1,2,\ldots,j.
$$
Therefore, we know from the second equation of 
\eqref{isosys} that
$$
k=\biggl(-\Delta-\dfrac{\lambda_{i}}{1+2U(\lambda^{*})}
\biggr)^{-1}0=0
\quad
\mbox{for}\ i=1,2,\ldots,j.
$$
Consequently, we deduce that
$D_{(h,k,\mu)}H(\lambda^{*}, \varPhi_{i}, 0,
\mu^{*}_{i}, 0)
\in\mathcal{L}(\boldsymbol{X}_{p}\times\mathbb{C},
\boldsymbol{Y}_{p}\times\mathbb{C})$
is an isomorphism for each
$i=1,2,\ldots,j$.
With \eqref{H0}, the implicit function theorem
gives a neighborhood $\mathcal{U}^{*}_{i}$
of $(\lambda^{*}, \varPhi_{i}, 0, \mu^{*}_{i}, 0)
\in \mathbb{R}\times \boldsymbol{X}_{p}\times
\mathbb{C}\times\mathbb{R}$,
a small number
\begin{equation}\label{deltai}
\delta_{i}(\lambda^{*})>0
\end{equation} and
the mapping
$$
B((\lambda^{*}, 0); \delta_{i}(\lambda^{*}))\ni
(\lambda, \varepsilon )\mapsto
(h_{i}(\lambda, \varepsilon ),
k_{i}(\lambda, \varepsilon ),
\mu_{i} (\lambda, \varepsilon ))
\in\boldsymbol{X}_{p}\times\mathbb{C}
$$
of class $C^{1}$,
where
$B((\lambda^{*}, 0); \delta_{i}(\lambda^{*}))=\{
(\lambda, \varepsilon)\in\mathbb{R}^{2}\,:\,
(\lambda-\lambda^{*})^{2}+\varepsilon^{2}<
\delta_{i}(\lambda^{*})^{2}\}$,
such that
all the solutions of 
$H(\lambda, h, k, \mu, \varepsilon )=0$
in $\mathcal{U}^{*}_{i}$ are given by
$$
(h,k,\mu)=(h_{i }(\lambda, \varepsilon ),
k_{i }(\lambda, \varepsilon ),
\mu_{i } (\lambda, \varepsilon ))
\quad\mbox{for}\ 
(\lambda, \varepsilon )\in B((\lambda^{*}, 0); \delta_{i}(\lambda^{*})).
$$
Hence it holds that
\begin{equation}\label{zero}
(h_{i }(\lambda, 0 ),
k_{i }(\lambda, 0 ),
\mu_{i } (\lambda, 0 ))
=
(\varPhi_{i}, 0, \lambda_{i}-\lambda ).
\end{equation}

It will be shown that 
$\{\mu_{i}(\lambda, \varepsilon )\}^{j}_{i =1}$
are real for any
$(\lambda, \varepsilon )\in B((\lambda^{*}, 0); \delta^{*}_{i })$.
Suppose for contradiction that
$\Im\mu_{i}(\hat{\lambda }, \hat{\varepsilon })\neq 0$
with some $i\in\{1,2,\ldots, j\}$ and
$(\hat{\lambda }, \hat{\varepsilon })\in B((\lambda^{*}, 0); \delta^{*}_{i })$.
Taking the complex conjugate of \eqref{Gdef}, one can verify that
the conjugate 
$\overline{\mu}_{i}(\hat{\lambda }, \hat{\varepsilon })
\,(\,\neq \mu_{i}(\hat{\lambda }, \hat{\varepsilon })\,)$
is also an eigenvalue of \eqref{Gdef} 
(also \eqref{eg2}).
It follows that
$$
H(\hat{\lambda}, \overline{h}_{i}(\hat{\lambda}, \hat{\varepsilon}),
\overline{k}_{i}(\hat{\lambda }, \hat{\varepsilon }), \overline{\mu}_{i}(\hat{\lambda }, \hat{\varepsilon }),
\hat{\varepsilon })=0
\quad
\mbox{in addition to}
\quad
H(\hat{\lambda}, h_{i}(\hat{\lambda}, \hat{\varepsilon}),
k_{i}(\hat{\lambda }, \hat{\varepsilon }), 
\mu_{i}(\hat{\lambda }, \hat{\varepsilon }),
\hat{\varepsilon })=0.
$$
Hence this contradicts the fact that
all the solutions of $H(\lambda, h, k, \mu, \varepsilon )=0$
in $\mathcal{U}^{*}_{i}$
are uniquely determined by
$(\lambda, h_{i}(\lambda , \varepsilon ), 
k_{i}(\lambda, \varepsilon ),
\mu_{i}(\lambda, \varepsilon ), \varepsilon )$
for all $(\lambda, \varepsilon )\in
B((\lambda^{*}, 0); \delta^{*}_{i })$.

We aim to count the negative eigenvalues 
of \eqref{Gdef} (also \eqref{eg2}) when 
$\lambda$ is close to the bifurcation point
$\beta_{j}(\varepsilon ):=\beta_{j, \varepsilon }$ with
$\alpha =1/\varepsilon $.
By virtue of $\beta_{j}(\varepsilon )\to\lambda_{j}$
as $\varepsilon\to 0$,
we first set $\lambda^{*}=\lambda_{j}$ with
some $j\ge 2$.
In view of \eqref{cri} and \eqref{zero},
we see that
if 
$$
0<\varepsilon <\delta (\lambda_{j}):=\min_{i=1,\ldots, j}
\delta_{i}(\lambda_{j}),
$$
then
there are at least $j-1$ 
negative eigenvalues of \eqref{Gdef} (also \eqref{eg2}) as
$$
\mu_{1}(\lambda_{j}, \varepsilon )<\mu_{2}(\lambda_{j}, \varepsilon )<\cdots<
\mu_{j-1}(\lambda_{j}, \varepsilon )\,(\,<0\,)
$$
and
$\mu_{j}(\lambda_{j}, \varepsilon )$ lies in a neighborhood of zero.
Here we recall that
the bifurcation point
$$(\beta_{j,\alpha}, u(\,\cdot\,,\beta_{j,\alpha}, \alpha ),
v(\,\cdot\,,\beta_{j,\alpha}, \alpha ))\in\mathcal{C}_{\alpha, \varLambda}$$
satisfies
$\beta_{j,\alpha}\to\lambda_{j}$ as $\alpha\to\infty$.
Then the continuity of $\mu_{i}(\lambda, \varepsilon )$ implies that
\begin{equation}\label{bj00}
\mu_{1}(\beta_{j}(\varepsilon ), \varepsilon )<\mu_{2}(\beta_{j}(\varepsilon ), \varepsilon )<\cdots<
\mu_{j-1}(\beta_{j}(\varepsilon ), \varepsilon )\,(\,<0\,),
\end{equation}
where $\beta_{j}(\varepsilon ):=\beta_{j,\alpha}$ with $\alpha=1/\varepsilon $,
if $\varepsilon>0$ is sufficiently small.
Here we remark that \eqref{Gdef} (also \eqref{eg2}) with 
$\lambda=\beta_{j}(\varepsilon )$ necessarily
has the zero eigenvalue since 
$(\beta_{j,\alpha}, u(\,\cdot\,,\beta_{j,\alpha}, \alpha ),
v(\,\cdot\,,\beta_{j,\alpha}, \alpha ))\in\mathcal{C}_{\alpha, \varLambda}$
is a bifurcation point.
Together with the facts that $\mu_{j}(\lambda_{j}, \varepsilon )$ is the unique
eigenvalue near zero 
(for \eqref{Gdef} with $\lambda=\lambda_{j}$) 
and that $\beta_{j}(\varepsilon )\to \lambda_{j}$ as $\varepsilon\to 0$,
we deduce that
\begin{equation}\label{bj0}
\mu_{j}(\beta_{j}(\varepsilon ), \varepsilon )=0
\end{equation}
if $\varepsilon>0$ is sufficiently small.

Next it will be shown that
$\partial_{\lambda}\mu_{j}(\beta_{j}(\varepsilon ), \varepsilon )<0$.
Differentiating the first component of \eqref{Gdef} 
by $\lambda$
and setting $(\lambda, \varepsilon)=(\lambda_{j}, 0)$ 
in the resulting expression, 
one can see that
$
h_{\lambda }(\lambda, \varepsilon ):=
\partial_{\lambda } h(\lambda, \varepsilon )$
satisfies
\begin{equation}\label{der}
\Delta h_{\lambda}(\lambda_{j}, 0)+(1+
\partial_{\lambda}\mu_{j}(\lambda_{j}, 0))\varPhi_{j}+
(\lambda_{j}+\mu_{j}(\lambda_{j}, 0))h_{\lambda}(\lambda_{j}, 0)=0
\quad\mbox{in}\ \Omega.
\end{equation}
Here we note
$\mu_{j}(\lambda_{j}, 0)=0$
by setting $\varepsilon = 0$ in \eqref{bj0}.
Taking the inner product of \eqref{der} with $\varPhi_{j}$, we obtain
$\partial_{\lambda}\mu_{j}(\lambda_{j}, 0)=-1$.
We recall that $\mu_{j}(\lambda, \varepsilon )$ 
is of class $C^{1}$ near $(\lambda, \varepsilon )=(\lambda_{j}, 0)$.
Therefore, there exists a small positive number
$\delta^{\#}_{j}$ such that,
if $0<\varepsilon<\delta^{\#}_{j}$, then
\begin{equation}\label{bifu}
\partial_{\lambda}\mu_{j}(\beta_{j}(\varepsilon ), \varepsilon )<-\dfrac{1}{2}.
\end{equation}
By virtue of \eqref{bj00}, \eqref{bj0} and \eqref{bifu},
there exists
$\tau_{j}>0$
such that,
if $0<\varepsilon<\delta^{\#}_{j}$,
then
\begin{equation}\label{j-1}
\mu_{1}(\lambda, \varepsilon )<\mu_{2}(\lambda, \varepsilon )<\cdots<
\mu_{j-1}(\lambda, \varepsilon )<0
\quad\mbox{for}\ 
\lambda\in(\beta_{j}(\varepsilon )-\tau_{j}, \beta_{j}(\varepsilon )+\tau_{j})
\end{equation}
and
\begin{equation}\label{pass}
\mu_{j}(\lambda, \varepsilon )
\begin{cases}
>0\quad&\mbox{for}\ \lambda\in 
(\beta_{j}(\varepsilon )-\tau_{j}, \beta_{j}(\varepsilon )),\\
=0\quad&\mbox{for}\ \lambda=\beta_{j}(\varepsilon ),\\
<0\quad&\mbox{for}\ \lambda\in (\beta_{j}(\varepsilon ),
\beta_{j}(\varepsilon )+\tau_{j}).
\end{cases}
\end{equation}

The final task of the proof 
is to count the negative eigenvalues of \eqref{Gdef}
(also \eqref{eg2})
when $\lambda$ is away from the bifurcation point
$\beta_{j}(\varepsilon )$.
To do so, we assume
$$\lambda^{*}\in 
\biggl(
\lambda_{j}+\dfrac{\tau_{j}}{2}, \lambda_{j+1}-\dfrac{\tau_{j+1}}{2}\biggr)
=:I_{j}
$$
with some $j\in\mathbb{N}$.
By virtue of \eqref{reg} and \eqref{zero},
we see that
if 
$$
0<\varepsilon <\delta (\lambda^{*}),
$$
then
there exist negative eigenvalues of \eqref{Gdef} (also \eqref{eg2}) as
$$
\mu_{1}(\lambda^{*}, \varepsilon )<\mu_{2}(\lambda^{*}, \varepsilon )<\cdots<
\mu_{j}(\lambda^{*}, \varepsilon )\,(\,<0\,).
$$
Together with the continuity of $\mu_{i}(\lambda, \varepsilon)$
$(i=1,2,\ldots , j)$,
we find a small $\tau^{*}>0$, which is depending on $\lambda^{*}$ 
such that
if $0<\varepsilon <\delta^{*}$, then
\begin{equation}\label{j}
\mu_{1}(\lambda, \varepsilon )<\mu_{2}(\lambda, \varepsilon )<\cdots<
\mu_{j}(\lambda, \varepsilon )\,(\,<0\,)
\quad\mbox{for any}\ \lambda\in (\lambda^{*}-\tau^{*}, \lambda^{*}+\tau^{*})=:J(\lambda^{*}).
\end{equation}
By virtue of $\beta_{j}(\varepsilon )\to \lambda_{j}$ as $\varepsilon \to 0$,
we can see that
$$
\bigcup^{\overline{j}}_{j=1}\bigcup_{\lambda^{*}\in I_{j}}J(\lambda^{*})\cup\,
(\beta_{2}(\varepsilon )-\tau_{2}, \beta_{2}(\varepsilon )+\tau_{2})
\cup\cdots\cup
(\beta_{\overline{j}}(\varepsilon )-\tau_{j}, \beta_{\overline{j}}(\varepsilon )+\tau_{j})
$$
gives an open covering of the compact set
$[\lambda_{1}+\tau_{1}, \varLambda]$.
It follows that there exist
$$\lambda^{*}_{(1)},\ldots,
\lambda^{*}_{(m)}\in\bigcup^{\overline{j}}_{j=1}I_{j}$$ 
such that
$$
\bigcup^{m}_{n=1}J(\lambda^{*}_{(n)})\cup\,
(\beta_{2}(\varepsilon )-\tau_{2}, \beta_{2}(\varepsilon )+\tau_{2})
\cup\cdots\cup
(\beta_{\overline{j}}(\varepsilon )-\tau_{j}, \beta_{\overline{j}}(\varepsilon )+\tau_{j})
$$
covers $[\lambda_{1}+\tau_{1}, \varLambda]$.
Therefore, we can deduce from \eqref{j-1}, \eqref{pass} and \eqref{j} that,
if 
$$
0<\varepsilon<\min_{j=1,\ldots,\overline{j}}\delta^{\#}_{j}
\quad\mbox{and}\quad
0<\varepsilon<\min_{n=1,\ldots,m}\delta (\lambda^{*}_{(n)}),
$$
then,
for each $j=1,\ldots,\overline{j}$,
$$
\mu_{1}(\lambda, \varepsilon )<\mu_{2}(\lambda, \varepsilon )<\cdots<
\mu_{j}(\lambda, \varepsilon )<0
\quad\mbox{for any}\ 
\lambda\in(\beta_{j}(\varepsilon ), \beta_{j+1}(\varepsilon )] \cap
(\lambda_{1}+\tau_{1}, \varLambda].
$$
By a slight modification of the argument to get 
\eqref{pass}, we can show that
$$
\mu_{1}(\lambda, \varepsilon )<0
\quad\mbox{for}\ \lambda\in (\lambda_{1}, \lambda_{1}+\tau_{1}]
$$
if $\varepsilon>0$ is small enough.
Consequently, the above argument enables us to conclude that
$$
i(\lambda, u(\,\cdot\,,\lambda, \varepsilon^{-1}), 
v(\,\cdot\,,\lambda, \varepsilon^{-1}))=j
\quad\mbox{for any}\ 
\lambda\in (\beta_{j}(\varepsilon ), \beta_{j+1}(\varepsilon )]\cap
(\lambda_{1}, \varLambda],
$$
where $\beta_{1}(\varepsilon )=\lambda_{1}$.
Then the proof of Theorem \ref{mainthm1} is complete.
\end{proof}
In the one-dimensional case,
it can be proved that the Morse index
$i(\lambda, u(\,\cdot\,,\lambda, \alpha )
v(\,\cdot\,,\lambda, \alpha ))$
obtained in Theorem \ref{mainthm1}
is equal to the dimension of
the local unstable manifold of
the steady-state
$(u(\,\cdot\,,\lambda, \alpha ),
v(\,\cdot\,,\lambda, \alpha ))$ of \eqref{para}.
\begin{cor}\label{cor1}
Assume 
$\Omega=(-\ell, \ell)$. 
If
$\alpha>0$ is large,
then for each
$\lambda\in (\beta_{j, \alpha}, \beta_{j+1, \alpha})\cap
(\lambda_{1}, \varLambda]$,
there exists a local unstable manifold
$\mathcal{W}^{u}((u(\,\cdot\,,\lambda, \alpha ),
v(\,\cdot\,,\lambda, \alpha )); \mathcal{O})$
in $\boldsymbol{Z}_{\theta}:=W^{1+\theta,2}_{0}(\Omega )\times W^{1+\theta, 2}_{0}(\Omega )$
satisfying 
$\dim \mathcal{W}^{u}((u(\,\cdot\,,\lambda, \alpha ),
v(\,\cdot\,,\lambda, \alpha )); \mathcal{O})=j$,
where $\theta\in (0,1/2)$ and $\mathcal{O}
\subset
\boldsymbol{Z}_{\theta}
$ is a neighborhood of 
$(u(\,\cdot\,,\lambda, \alpha ),
v(\,\cdot\,,\lambda, \alpha ))$.
\end{cor}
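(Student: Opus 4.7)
The plan is to realize \eqref{para} as an abstract quasilinear parabolic equation
\[
\frac{dW}{dt}+\mathcal{A}(W)W=\mathcal{F}(W),\qquad W=(u,v)^{T},
\]
on the Hilbert space $\boldsymbol{H}:=L^{2}(\Omega)\times L^{2}(\Omega)$, with principal part $\mathcal{A}(W)$ given by the matrix elliptic operator $-\Delta\bigl(\bigl[\begin{smallmatrix}1+\alpha v & \alpha u\\ \alpha v & 1+\alpha u\end{smallmatrix}\bigr]\,\cdot\,\bigr)$ subject to Dirichlet boundary conditions and domain $\boldsymbol{X}_{2}$, and with $\mathcal{F}$ the Nemytskii operator associated to the Lotka--Volterra reaction terms. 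The restriction $\theta\in(0,1/2)$ makes $\boldsymbol{Z}_{\theta}$ agree (up to equivalent norms) with a real-interpolation space between $\boldsymbol{X}_{2}$ and $\boldsymbol{H}$ that retains the Dirichlet condition, while the one-dimensional Sobolev embedding $W^{1+\theta,2}_{0}(\Omega)\hookrightarrow C(\overline{\Omega})$ makes $\boldsymbol{Z}_{\theta}$ a Banach algebra well adapted to the quasilinear coefficients and the quadratic reaction.

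Next I would verify, in a $\boldsymbol{Z}_{\theta}$-neighbourhood $\mathcal{O}$ of the steady-state $W^{*}:=(u(\cdot,\lambda,\alpha),v(\cdot,\lambda,\alpha))$, the hypotheses of the local invariant-manifold theorem for quasilinear parabolic problems of \cite[p.312]{Yag}: uniform sectoriality of $\mathcal{A}(W)$ for $W\in\mathcal{O}$, which follows from strong ellipticity of the coefficient matrix (its eigenvalues being bounded away from zero uniformly on $\mathcal{O}$ by the positivity of $u^{*},v^{*}$ and $C(\overline{\Omega})$-closeness), together with $C^{1}$-regularity of the maps $W\mapsto \mathcal{A}(W)\in\mathcal{L}(\boldsymbol{X}_{2},\boldsymbol{H})$ and $\mathcal{F}:\boldsymbol{Z}_{\theta}\to\boldsymbol{H}$. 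Under these conditions the linearization of the abstract equation at $W^{*}$ is precisely $-L(\lambda,u^{*},v^{*})$, so that negative-real-part eigenvalues of \eqref{eg2} correspond one-to-one to unstable directions of the linear semigroup $e^{-L(\lambda,u^{*},v^{*})t}$.

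By Theorem \ref{mainthm1}, for $\lambda\in(\beta_{j,\alpha},\beta_{j+1,\alpha})$ there are exactly $j$ eigenvalues of $L(\lambda,u^{*},v^{*})$ with negative real part, and the implicit-function/complex-conjugation argument already used in the proof of Theorem \ref{mainthm1} shows that the perturbed eigenvalues $\mu_{i}(\lambda,\varepsilon)$ remain real, while the open-interval assumption excludes the eigenvalue $0$. Hence $W^{*}$ is hyperbolic: the spectrum of the generator $-L(\lambda,u^{*},v^{*})$ splits into a $j$-dimensional piece contained in $\{\operatorname{Re}z>\delta\}$ and the remainder contained in $\{\operatorname{Re}z<-\delta\}$ for some $\delta>0$. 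Invoking the quasilinear local unstable manifold theorem then produces a $C^{1}$ invariant submanifold $\mathcal{W}^{u}(W^{*};\mathcal{O})\subset\boldsymbol{Z}_{\theta}$ tangent at $W^{*}$ to the $j$-dimensional unstable subspace, hence of dimension $j$. The main obstacle will be the bookkeeping required to check the uniform sectoriality and $C^{1}$-dependence across $\mathcal{O}$ for a matrix cross-diffusion operator with $\boldsymbol{Z}_{\theta}$-close coefficients; this is however routine once the embedding $\boldsymbol{Z}_{\theta}\hookrightarrow C(\overline{\Omega})^{2}$ is exploited and the classical Agmon--Douglis--Nirenberg resolvent estimate for matrix elliptic Dirichlet problems is invoked.
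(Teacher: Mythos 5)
Your proposal follows essentially the same route as the paper: the paper's proof likewise realizes \eqref{para} as an abstract quasilinear parabolic evolution equation in Yagi's framework, takes $\boldsymbol{Z}_{\theta}$ (with $\theta\in(0,1/2)$) as the underlying phase space, and invokes Yagi's local unstable manifold theorem together with the eigenvalue count of Theorem \ref{mainthm1} to get $\dim\mathcal{W}^{u}=j$. The paper simply cites \cite{Ki,Yag} for the well-posedness, the construction of the dynamical system in $\boldsymbol{Z}_{\theta}$, and the unstable manifold result, whereas you spell out the verification of the hypotheses (sectoriality, $C^{1}$-dependence, hyperbolicity via the open-interval restriction), which is consistent with and fills in that citation.
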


\begin{proof}
In the one-dimensional case where $\Omega=(-\ell, \ell)$,
it is known that \eqref{para} admits a unique time-global
solution in the class
$
(u, v)\in C([0,\infty): \boldsymbol{Z}_{\theta})\cap
C^{\infty}((0,\infty)\times\overline{\Omega})^{2}$,
see e.g.,
\cite{Ki, Yag}.
Furthermore, it is also known
from \cite[p.520]{Yag} that
the dynamical system from the abstract 
evolution equation associated with \eqref{para}
can be determined in the universal space
$\boldsymbol{Z}_{\theta}$.
Then the result in \cite[Section 6.5]{Yag} ensures
the desired local unstable manifold.
\end{proof}

\section{Morse index of segregation states}
In this section,
the Morse index of almost all solutions on 
$\mathcal{S}^{\pm}_{j, \alpha, \varLambda}$
(in \eqref{seg+} and \eqref{seg-})
will be obtained.
Before stating the result, it should be noted that 
$\mathcal{S}^{\pm}_{j, \alpha, \varLambda}$
can be 
parameterized by 
$\lambda$ 
outside the neighborhood of bifurcation point.

\begin{lem}[\cite{IKS}]
Assume 
$\Omega=(-\ell, \ell)$. 
For any small $\eta>0$
and $j\ge 2$,
there exist small 
$\underline{\delta}^{\pm}_{j}(\eta, \varLambda )>0$
such that
for each $\alpha>1/\delta^{\pm}_{j}$,
there exists a pair of simple curves 
$\mathcal{S}^{\pm}_{j,\alpha}(\lambda_{j}+\eta, \varLambda)
\,(\,\subset 
\mathcal{S}^{\pm}_{j, \alpha, \varLambda}\,)$
consisting of positive solutions to \eqref{SP} parameterized by
$\lambda\in (\lambda_{j}+\eta, \varLambda)$ such as
$$
\mathcal{S}^{\pm}_{j,\alpha}(\lambda_{j}+\eta, \varLambda)=
\{\,(\lambda, \widetilde{u}^{\pm}_{j}(\lambda,\varepsilon ), 
\widetilde{v}^{\pm}_{j}(\lambda,\varepsilon))
\in (\lambda_{j}+\eta, \varLambda)\times\boldsymbol{X}_{p}\,\},
$$
where $\varepsilon=1/\alpha$ and
$$
(\lambda, \widetilde{u}^{\pm}_{j}(\lambda,\varepsilon ), 
\widetilde{v}^{\pm}_{j}(\lambda,\varepsilon))=
(\xi_{j}(s, \alpha ), u_{j}(\,\cdot\,,s, \alpha ),
v_{j}(\,\cdot\,,s, \alpha ))$$
with the expressions in \eqref{seg+} and \eqref{seg-}.
\end{lem}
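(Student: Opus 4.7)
The plan is to reparameterize the already-constructed curves $\mathcal{S}^{\pm}_{j,\alpha, \varLambda}$ by $\lambda$ through inversion of $s\mapsto \xi_{j}(s,\alpha)$. Equivalently, I need to show $\partial_{s}\xi_{j}(s,\alpha)\neq 0$ for every $s$ in the preimage of $[\lambda_{j}+\eta,\varLambda]$, once $\alpha$ is large enough. The first input is that the limit pitchfork branches $\widetilde{\mathcal{S}}^{\pm}_{j,\infty}$ are themselves globally parameterized by $\lambda\in(\lambda_{j},\infty)$ through a $C^{1}$ map $\lambda\mapsto w(\,\cdot\,,\lambda)$ solving \eqref{LS2}, and that the linearization of \eqref{LS2} at $w(\,\cdot\,,\lambda)$ is an isomorphism for every $\lambda>\lambda_{j}$. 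This non-degeneracy away from the pitchfork point is standard in one dimension and is accessible through the phase-plane / shooting analysis underlying \cite{HY, IKS}.

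Next, for each $\lambda_{0}\in[\lambda_{j}+\eta,\varLambda]$, I would apply the implicit function theorem to $F(\lambda, u, v)=0$ (the operator from Section 2) at the limit data $(\lambda_{0}, w_{+}(\,\cdot\,,\lambda_{0}), w_{-}(\,\cdot\,,\lambda_{0}))$. A change of variables analogous to the $(h,k)$ substitution used in the proofs of Lemma \ref{semitrilem} and Theorem \ref{mainthm1} decouples the $\varepsilon=0$ linearization into single-equation pieces whose invertibility follows from the non-degeneracy just noted together with positivity arguments in the spirit of Lemma \ref{isolem}. The IFT then provides, for all $\varepsilon=1/\alpha$ below a local threshold $\delta_{\lambda_{0}}$, a unique $C^{1}$ branch $\lambda\mapsto(\widetilde{u}^{\pm}_{j}(\lambda,\varepsilon),\widetilde{v}^{\pm}_{j}(\lambda,\varepsilon))$ near $\lambda_{0}$, which by local uniqueness in the IFT together with \eqref{wconv} must coincide with a piece of $\mathcal{S}^{\pm}_{j,\alpha,\varLambda}$.

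Finally, by compactness of $[\lambda_{j}+\eta,\varLambda]$ I extract a finite subcover of the neighborhoods produced above, set $\underline{\delta}^{\pm}_{j}(\eta,\varLambda)$ to the minimum of the corresponding local thresholds, and glue the local parameterizations, using uniqueness in the IFT and connectedness of $\mathcal{S}^{\pm}_{j,\alpha,\varLambda}$, into a single simple curve $\mathcal{S}^{\pm}_{j,\alpha}(\lambda_{j}+\eta,\varLambda)$. The main obstacle is the non-degeneracy of the limit linearization at $w(\,\cdot\,,\lambda_{0})$, together with the verification that it survives the singular perturbation $\varepsilon\to 0^{+}$: the limit problem \eqref{LS2} has a free boundary along $\{w=0\}$, so its linearization must be analyzed on $\{w>0\}$ and $\{w<0\}$ separately, but in one dimension the ODE structure makes this tractable via phase-plane methods as in \cite{HY}, and the decoupling of the perturbed linearization inherited from the $(h,k)$ ansatz keeps the invertibility uniform in $\varepsilon$ small.
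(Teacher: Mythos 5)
This lemma is imported verbatim from \cite{IKS}; the present paper gives no proof of it, so your attempt has to be judged on its own merits rather than against an internal argument. Your overall scheme --- freeze $\lambda_{0}\in[\lambda_{j}+\eta,\varLambda]$, invert locally in a neighborhood of $\varepsilon=1/\alpha=0$ by the implicit function theorem, identify the resulting branch with a piece of $\mathcal{S}^{\pm}_{j,\alpha,\varLambda}$ via local uniqueness and \eqref{wconv}, then use compactness of $[\lambda_{j}+\eta,\varLambda]$ to get a uniform threshold $\underline{\delta}^{\pm}_{j}(\eta,\varLambda)$ --- is the natural strategy and is consistent with how such $\lambda$-parameterizations are obtained in \cite{IKS} and with the covering argument used in the proof of Theorem \ref{mainthm1}.

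There is, however, a genuine gap at the central step: you cannot apply the implicit function theorem to $F(\lambda,u,v)=0$ ``at the limit data $(\lambda_{0},w_{+}(\,\cdot\,,\lambda_{0}),w_{-}(\,\cdot\,,\lambda_{0}))$''. First, $F$ contains $\alpha$ explicitly and has no limit as $\alpha\to\infty$ in the form given in Section 2; the problem is a singular perturbation, and some rescaling or change of unknowns must be performed before $\varepsilon=0$ can even be inserted into the equation. Second, and more seriously, the limit profile is not an admissible base point in your functional setting: at each interior zero of $w$ the functions $w_{\pm}$ are merely Lipschitz, their distributional second derivatives carry Dirac masses, so $(w_{+},w_{-})\notin\boldsymbol{X}_{p}$ and the linearization you propose to invert is not defined as an element of $\mathcal{L}(\boldsymbol{X}_{p},\boldsymbol{Y}_{p})$ --- this is precisely why \eqref{wconv} is stated only as uniform convergence, in contrast with the $\boldsymbol{X}_{p}$-convergence \eqref{unif} available on the coexistence branch. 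The missing idea is a change of unknowns that regularizes the singular limit (this is what \cite{IKS} relies on, and what the $(h,k)$-substitutions of Section 4 mimic at the linearized level), after which the $\varepsilon=0$ problem becomes the smooth scalar equation \eqref{LS2} for $w$ itself and the implicit function theorem applies in a genuine Banach-space neighborhood. Finally, the nondegeneracy of the linearization of \eqref{LS2} along $\widetilde{\mathcal{S}}^{\pm}_{j,\infty}$ for every $\lambda\in[\lambda_{j}+\eta,\varLambda]$, which your argument needs uniformly both for the local inversion and for the uniformity of $\underline{\delta}^{\pm}_{j}$, is asserted as ``standard'' but is exactly the nontrivial input that has to be established (e.g.\ by the time-map/phase-plane analysis behind \cite{HY}); as written, it is assumed rather than proved.
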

The next result gives the Morse index 
$
i(\lambda,
\widetilde{u}^{\pm}_{j}(\lambda, \varepsilon ),
\widetilde{v}^{\pm}_{j}(\lambda, \varepsilon ))$
of any 
$(\lambda,
\widetilde{u}^{\pm}_{j}(\lambda, \varepsilon ),
\widetilde{v}^{\pm}_{j}(\lambda, \varepsilon ))
\in\mathcal{S}^{\pm}_{j, \alpha}(\lambda_{j}+\eta, \varLambda )$:
\begin{thm}\label{mainthm2}
Assume 
$\Omega=(-\ell, \ell)$.
For any 
$(\lambda, \widetilde{u}^{\pm}_{j}(\lambda, \varepsilon ),
\widetilde{v}^{\pm}_{j}(\lambda, \varepsilon ))\in
\mathcal{S}^{\pm}_{j, \alpha}(\lambda_{j}+\eta, \varLambda )$,
there exists a small positive   
$\delta$ which is independent of $\lambda
\in (\lambda_{j}+\eta,\varLambda )$ and $j\in
\{2,\ldots,\overline{j}\}$
such that,
if $0<\varepsilon<\delta$, then
$
i(\lambda,
\widetilde{u}^{\pm}_{j}(\lambda, \varepsilon ),
\widetilde{v}^{\pm}_{j}(\lambda, \varepsilon ))=j$.
\end{thm}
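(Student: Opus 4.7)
The plan is to mirror the proof of Theorem \ref{mainthm1}: we introduce $\varepsilon := 1/\alpha$, pass to the limit $\varepsilon \to 0$ in the linearized eigenvalue problem \eqref{eg2} around $(\widetilde{u}^{\pm}_{j}(\lambda, \varepsilon), \widetilde{v}^{\pm}_{j}(\lambda, \varepsilon))$, count the negative eigenvalues of the limit problem, and then deduce persistence of the count for small $\varepsilon$ by the implicit function theorem combined with a finite covering of the compact parameter range $[\lambda_{j} + \eta, \varLambda]$. By \eqref{wconv} together with the preceding lemma, the steady-state converges uniformly on $\overline{\Omega}$ to $(w_{+}, w_{-})$ as $\varepsilon \to 0$, where $w = w(\,\cdot\,, \lambda)$ is the solution of \eqref{LS2} on the pitchfork branch $\widetilde{\mathcal{S}}^{\pm}_{j, \infty}$ with exactly $j - 1$ interior zeros in $(-\ell, \ell)$.

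The first task is to introduce a change of variables $(\phi, \psi) \mapsto (h, k)$ tailored to the segregation limit. As already observed in the proof of Theorem \ref{mainthm1}, subtracting the two components in \eqref{linop} causes the $\alpha$-singular cross-diffusion contributions to cancel, so $h := \phi - \psi$ is the natural scalar variable, consistent with $\widetilde{u}^{\pm}_{j} - \widetilde{v}^{\pm}_{j} \to w$. For $k$, we take a suitably normalized combination of the two arguments of the Laplacians in \eqref{linop} — for instance a renormalization of $(1 + \alpha v^{*})\phi + \alpha u^{*} \psi$ by $1 + \alpha(u^{*} + v^{*})$ — so that $k$ stays bounded in $X_{p}$ uniformly as $\varepsilon \to 0$. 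Under this substitution, \eqref{eg2} is rewritten as $G(\lambda, h, k, \mu, \varepsilon) = 0$ in analogy with \eqref{Gdef}, extending continuously to $\varepsilon = 0$; the limit equation for $h$ becomes the linearization of \eqref{LS2} around $w$,
\[
-\zeta'' - \lambda \zeta + 2 b_{1} w_{+} \zeta + 2 c_{2} w_{-} \zeta = \mu \zeta \quad \mbox{in } (-\ell, \ell), \qquad \zeta(\pm \ell) = 0,
\]
while the limit equation for $k$ reduces to a weighted scalar eigenvalue problem of Krein-Rutman type, analyzable by an analogue of Lemma \ref{isolem} with $U(\lambda)$ replaced by $w$.

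Counting the negative eigenvalues of the limit system should yield $j$ in total. A Sturm-Liouville argument applied to the $h$-equation in one space dimension, using the nodal pattern of $w$ (exactly $j$ nodal domains in $(-\ell, \ell)$) together with the nondegeneracy of $w$ along $\widetilde{\mathcal{S}}^{\pm}_{j, \infty}$, supplies the bulk of the count, and the $k$-equation contributes the remaining negative eigenvalue by a comparison argument parallel to that between Lemma \ref{isolem} and part (i) of Lemma \ref{monolem}. With the count in hand, the implicit function theorem is applied exactly as in \eqref{H0}--\eqref{zero}: verifying that $D_{(h, k, \mu)}H$ is an isomorphism at each limit triple $(\lambda, h^{*}_{i}, k^{*}_{i}, \mu^{*}_{i}, 0)$ reduces, as in \eqref{isosys}, to the simplicity of the limit eigenvalues, which is automatic in the one-dimensional Sturm-Liouville setting. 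This produces $C^{1}$ branches $\mu_{i}(\lambda, \varepsilon)$ of real negative eigenvalues in neighborhoods of $(\lambda, 0)$, and a finite covering of $[\lambda_{j} + \eta, \varLambda]$ by such neighborhoods yields a $\lambda$-uniform threshold $\delta_{j}$. Taking the minimum over the finite index set $j \in \{2, \ldots, \overline{j}\}$ then gives a single $\delta > 0$ independent of both $\lambda$ and $j$, as required.

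The main obstacle I anticipate is the singular character of the limit $\varepsilon \to 0$: because $u^{*} v^{*} \to 0$, the cross-diffusion matrix in \eqref{linop} becomes nearly rank-deficient along the free interface $\{w = 0\}$, so one must carefully verify that the substitution $(h, k)$ produces a limit problem well-posed in $\boldsymbol{X}_{p}$, that eigenfunctions of \eqref{eg2} for small $\varepsilon$ converge in $\boldsymbol{X}_{p}$ to eigenfunctions of the limit (rather than concentrating on the interface), and that no spurious zero or negative eigenvalues are generated by the degenerating cross-diffusion near the interface. This free-boundary feature has no direct counterpart in the small-coexistence analysis of Section 3 and is the main new technical ingredient required here.
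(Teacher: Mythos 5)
Your overall scheme (set $\varepsilon=1/\alpha$, change variables $h=\phi-\psi$ plus a second combination $k$, pass to the limit $\varepsilon\to0$, continue the eigenvalue branches by the implicit function theorem with an $L^{2}$ normalization, and finish with a finite covering of $[\lambda_{j}+\eta,\varLambda]$ and a minimum over $j$) is indeed the paper's scheme, and your limiting $h$-equation agrees with \eqref{eg1dim}. The genuine gap is in where the negative eigenvalues come from. In the paper, $k:=(1+2\varepsilon^{-1}v^{*}_{\varepsilon})\phi+(1+2\varepsilon^{-1}u^{*}_{\varepsilon})\psi$ is the \emph{sum} of the two diffusion arguments, so the added equation has principal part exactly $k''$, and the $\mu$-coupling to $k$ carries the factor $\varepsilon/(\varepsilon+u^{*}_{\varepsilon}+v^{*}_{\varepsilon})$, which vanishes in the limit because $u^{*}_{\varepsilon}+v^{*}_{\varepsilon}\to|w^{*}|$ is of order one on the segregation branch. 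Consequently, in the limit problem \eqref{lin} the $k$-equation is not an eigenvalue problem at all: $k$ is slaved to $h$ through the Green operator of $-d^{2}/dx^{2}$, every finite eigenvalue comes from the scalar Sturm--Liouville problem \eqref{eg1dim} (count given by \eqref{jko}), and the modes with $h\equiv0$ are treated separately via \eqref{egh0}, where the paper shows their eigenvalues run off to $+\infty$ as $\varepsilon\to0$. Your proposal instead has the $k$-block supplying ``the remaining negative eigenvalue'' through ``a weighted eigenvalue problem of Krein--Rutman type, analyzable by an analogue of Lemma \ref{isolem} with $U(\lambda)$ replaced by $w$.'' This step fails twice over: the comparison you invoke (Lemma \ref{isolem} with Lemma \ref{monolem}(i)) is precisely what shows in Section 3 that the $k$-eigenvalues are \emph{positive}, so the same analogy cannot produce a negative one; and no such limiting weighted problem exists here, since the $\mu k$ term degenerates as above, while Lemma \ref{isolem} rests on the positivity of $U$ (a positive ground state for Krein--Rutman), which has no analogue for the sign-changing $w$.

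Two further remarks. Your tentative choice of $k$ (the first diffusion argument normalized by $1+\alpha(u^{*}+v^{*})$) would, after the rescaling by $\varepsilon$ needed to obtain a finite limit, produce a principal part of the form $(\,|w^{*}|\,k)''$ with a weight vanishing at the interfaces; the paper's unnormalized sum avoids exactly this, so the free-boundary difficulty you flag at the end is resolved by the choice of variables rather than by an additional compactness argument. Finally, your instinct that the Sturm--Liouville part alone accounts for only $j-1$ negative eigenvalues (since $w\in\widetilde{\mathcal{S}}^{\pm}_{j,\infty}$ has $j-1$ interior zeros) and that one more is needed to reach the stated value $j$ points at a real tension within the paper itself (its proof asserts the interior zero count is $j$, while the introduction quotes Morse index $j-1$ for these states); but extracting the missing negative eigenvalue from the $k$-block is not a viable mechanism, for the reasons above, so this part of your argument would have to be redone following the paper's route: all negative eigenvalues from \eqref{eg1dim}, and a proof that pure-$k$ modes have large positive eigenvalues.
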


\begin{proof}
Our task is to count the negative eigenvalues of
\begin{equation}\label{ege}
L(\lambda, 
\widetilde{u}^{\pm}_{j}(\lambda, \varepsilon ), 
\widetilde{v}^{\pm}_{j}(\lambda, \varepsilon ))
\biggl[
\begin{array}{l}
\phi\\
\psi
\end{array}
\biggr]
=
\mu
\biggl[
\begin{array}{l}
\phi\\
\psi
\end{array}
\biggr]
\qquad\mbox{for any}\
\lambda\in [\lambda_{j}+\eta, \varLambda ].
\end{equation}
Here we fix $\lambda^{*}\in (\lambda_{j}-\eta/2,
\varLambda+\eta/2)$ arbitrarily.
In what follows, 
we abbreviate
$$
(u^{*}_{\varepsilon }, v^{*}_{\varepsilon }):=
(\widetilde{u}^{\pm}_{j}(\lambda^{*}, \varepsilon ), 
\widetilde{v}^{\pm}_{j}(\lambda^{*}, \varepsilon ))
$$
to avoid complications with subscripts.
By the change of variables
$$
h(x):=\phi (x)-\psi (x),\quad
k(x):=(1+2\varepsilon^{-1}v^{*}_{\varepsilon }(x))\phi (x)
+(1+2\varepsilon^{-1}u^{*}_{\varepsilon }(x))\psi (x),
$$
we reduce \eqref{ege} to the equation
\begin{equation}\label{Gtil}
\widetilde{G}(\lambda, h,k,\mu, \varepsilon )=0,
\end{equation}
where
the mapping 
$$
(\lambda_{j}+\eta, \varLambda )\times
\boldsymbol{X}_{p}\times\mathbb{C}\times\mathbb{R}
\ni (\lambda, h,k,\mu, \varepsilon )\mapsto
\widetilde{G}(\lambda, h,k,\mu, \varepsilon )
\in\boldsymbol{Y}_{p}$$
is defined by
\begin{equation}
\begin{split}
&\widetilde{G}(\lambda, h,k,\mu, \varepsilon ):=\\
&\dfrac{d^{2}}{dx^{2}}
\biggl[
\begin{array}{c}
h\\
k
\end{array}
\biggr]
+
(\lambda +\mu )\biggl[
\begin{array}{ll}
1 & 0\\
\frac{u^{*}_{\varepsilon }-v^{*}_{\varepsilon }}
{\varepsilon+u^{*}_{\varepsilon }+ v^{*}_{\varepsilon }}
&
\frac{\varepsilon }
{\varepsilon +u^{*}_{\varepsilon }+ v^{*}_{\varepsilon }}
\end{array}
\biggr]
\biggl[
\begin{array}{c}
h\\
k
\end{array}
\biggr]
\\
&-\dfrac{1}{2(\varepsilon+
u^{*}_{\varepsilon }+ v^{*}_{\varepsilon })}
\biggl[
\begin{array}{ll}
2b_{1}u^{*}_{\varepsilon }+
(c_{1}-b_{2})v^{*}_{\varepsilon }
&
(c_{1}-b_{2})u^{*}_{\varepsilon }
-2c_{2}v^{*}_{\varepsilon }\\
2b_{1}u^{*}_{\varepsilon }+
(c_{1}+b_{2})v^{*}_{\varepsilon }
&
(c_{1}+b_{2})u^{*}_{\varepsilon }
+2c_{2}v^{*}_{\varepsilon }
\end{array}
\biggr]
\biggl[
\begin{array}{ll}
\varepsilon +2u^{*}_{\varepsilon} & \varepsilon\\
-\varepsilon-2v^{*}_{\varepsilon} & \varepsilon
\end{array}
\biggr]
\biggl[
\begin{array}{c}
h\\
k
\end{array}
\biggr].
\end{split}
\nonumber
\end{equation}
We recall from \eqref{unif} that
\begin{equation}\label{uvlim}
\lim_{\varepsilon\to 0}(u^{*}_{\varepsilon }, v^{*}_{\varepsilon })
=(w^{*}_{+},w^{*}_{-})
\quad\mbox{uniformly in}\ [-\ell,\ell ],\\
\end{equation}
where $w^{*}\in \widetilde{\mathcal{S}}^{+}_{j,\infty}\cup
\widetilde{\mathcal{S}}^{-}_{j,\infty}$
with $\lambda=\lambda^{*}$.
Here we consider the limiting equation
$\widetilde{G}(\lambda^{*}, h,k,\mu, 0)=0$,
that is,
\begin{equation}\label{lin}
\begin{cases}
h''+\lambda^{*} h-2(b_{1}w^{*}_{+}+c_{2}w^{*}_{-})h+\mu h=0
\quad&\mbox{in}\ (-\ell, \ell),\\
k''+(\lambda^{*}+\mu)\,(\mbox{sign}\,w^{*})h-
2(b_{1}w^{*}_{+}-c_{2}w^{*}_{-})h=0\quad&\mbox{in}\ 
(-\ell, \ell),\\
h(\pm\ell )=k(\pm\ell )=0,
\end{cases}
\end{equation}
where $\mbox{sign}\,w^{*}:=w^{*}/|w^{*}|$.
We note that the first equation of \eqref{lin}
is corresponding to the linearized eigenvalue problem of
\eqref{LS2} around $w^{*}$, that is,
\begin{equation}\label{eg1dim}
-h''-f_{w}(\lambda^{*}, w^{*})h=\mu h\quad\mbox{in}\ 
(-\ell, \ell),\qquad
h(\pm\ell )=0,
\end{equation}
where
$$
f(\lambda, w)=
\lambda w-b_{1}(w_{+})^{2}+c_{2}(w_{-})^{2}=
\begin{cases}
w(\lambda -b_{1}w)\quad&\mbox{for}\ w\ge 0,\\
w(\lambda +c_{2}w)\quad&\mbox{for}\ w<0.
\end{cases} 
$$
Let $\mu_{i,0}\in\mathbb{R}$
be the $i$-th eigenvalue of \eqref{eg1dim}.
By the Sturm-Liouville theory,
it is well-known that
$$
\sharp\{
\mbox{negative eigenvalues of }\eqref{eg1dim}\}=
\sharp\{
z\in (-\ell, \ell)\,:\,w(z)=0\}
=j.
$$
Then it follows that
all the eigenvalues of \eqref{eg1dim} are arranged as
\begin{equation}\label{jko}
\mu^{*}_{1,0}<\mu^{*}_{2,0}<\cdots<\mu^{*}_{j,0}
\,(\,<0<\,)\,
\mu^{*}_{j+1,0}<\cdots.
\end{equation}
For
\eqref{eg1dim} with $\mu=\mu^{*}_{i,0}$,
let $h^{*}_{i,0}(x)$ be the eigenfunction with
$\|h^{*}_{i,0}\|_{2}=1$ and $(h^{*}_{i,0})'(-\ell )>0$.
In view of \eqref{lin}, we see that
\begin{equation}\label{Gtil0}
\widetilde{G}
(\lambda^{*}, h^{*}_{i,0}, k^{*}_{i,0}, \mu^{*}_{i,0}, 0)=0,
\end{equation}
where
$$
k^{*}_{i,0}:=
\mathcal{G}\,
[\,\{\,2(c_{2}w^{*}_{-}-b_{1}w^{*}_{+})
+(\lambda^{*}+\mu^{*}_{i,0})\,\mbox{sign}\,w^{*}\,\}
h^{*}_{i,0}\,].
$$
Here $\mathcal{G}$ represents the inverse
operator of $-d^{2}/dx^{2}$ with homogeneous
Dirichlet boundary conditions at $x=\pm\ell$.

In \eqref{Gtil},
we set $h=0$ and consider the equation
$\widetilde{G}(\lambda, 0,k,\mu, \varepsilon )=0$,
which consists of 
\begin{equation}\label{egh0}
\begin{cases}
\{(2b_{1}+c_{1}-b_{2})u^{*}_{\varepsilon }+
(c_{1}-b_{2}-2c_{2})v^{*}_{\varepsilon }\}k=0
\quad&\mbox{in}\ (-\ell, \ell),\\
- k''
-\dfrac{\varepsilon}{2(\varepsilon +u^{*}_{\varepsilon }
+v^{*}_{\varepsilon })}
\left[
2(\lambda +\mu )
-
\left\{(2b_{1}+c_{1}+b_{2})u^{*}_{\varepsilon }
+(c_{1}+b_{2}+2c_{2})v^{*}_{\varepsilon }
\right\}
\right]k
=0
\quad&\mbox{in}\ (-\ell, \ell),\\
k(\pm\ell )=0.
\end{cases}
\end{equation}
It can be shown that 
the corresponding eigenvalues
$\mu_{\varepsilon_{n}}$ satisfy
$\mu_{\varepsilon_n}\to\infty$.
Suppose for contradiction that
$\mu_{\varepsilon_n}<M$
with some positive constant $M$ independent of $n$.
With the aid of \eqref{uvlim},
we know that the operator
$$
-\dfrac{d^{2}}{dx^{2}}
-\dfrac{\varepsilon_n}{2(\varepsilon_n +u_{\varepsilon_n }
+v_{\varepsilon_n })}
\left[
2(\lambda +\mu_{\varepsilon_{n}} )
-
\left\{(2b_{1}+c_{1}+b_{2})u_{\varepsilon_n }
+(c_{1}+b_{2}+2c_{2})v_{\varepsilon_n }
\right\}
\right]
\in\mathcal{L}(X_{p}, Y_{p})
$$
is invertible if $n$ is sufficiently large.
Then in this situation,
the second equation implies
$k_{n}=0$.
However, this
is a contradiction because $\mu_{\varepsilon_{n}}$
are eigenvalues.
Therefore, we can deduce that,
if some $k\neq 0$ satisfies
$\widetilde{G}(\lambda, 0,k,\mu,\varepsilon )=0$, 
then
$\mu$ is positive and
away from zero
when $\varepsilon >0$ is sufficiently small.
Therefore, we may assume $h\neq 0$ in \eqref{Gtil}
when counting negative eigenvalues of \eqref{ege}.

Here we define the mapping
$\widetilde{H}\,:\,
(\lambda_{j}+\eta, \varLambda )\times
\boldsymbol{X}_{p}\times\mathbb{C}\times\mathbb{R}
\to\boldsymbol{Y}_{p}\times\mathbb{R}$
by
$$
\widetilde{H}(\lambda, h,k,\mu,\varepsilon ):=
\left[
\begin{array}{l}
\widetilde{G}(\lambda, h,k,\mu,\varepsilon )\\
\|h\|^{2}_{2}-1
\end{array}
\right],
$$
where $\boldsymbol{X}_{p}$ and $\boldsymbol{Y}_{p}$ 
are regarded as \eqref{XpYp}.
Then it follows from \eqref{Gtil0} that
$$
\widetilde{H}
(\lambda^{*}, h^{*}_{i,0}, k^{*}_{i,0}, \mu^{*}_{i,0}, 0)=0.
$$
In order to construct the zero-level set of 
$\widetilde{H}$ in a neighborhood of 
$$(\lambda^{*}, h^{*}_{i,0}, k^{*}_{i,0}, \mu^{*}_{i,0}, 0)\in
\mathbb{R}\times\boldsymbol{X}_{p}\times\mathbb{C}\times\mathbb{R}$$
with the aid of the implicit function theorem,
we need to show that
$$
D_{(h,k,\mu )}
\widetilde{H}
(\lambda^{*}, h^{*}_{i,0}, k^{*}_{i,0}, \mu^{*}_{i,0}, 0)
\in\mathcal{L}(\boldsymbol{X}_{p}\times\mathbb{C},
\boldsymbol{Y}_{p}\times\mathbb{R})
$$
is an isomorphism.
For this end,
we assume that
$
D_{(h,k,\mu )}
\widetilde{H}
(\lambda^{*}, h^{*}_{i,0}, k^{*}_{i,0}, \mu^{*}_{i,0}, 0)
(h,k,\mu)^{T}=0$,
namely,
\begin{equation}\label{lin2}
\begin{cases}
h''+(\lambda^{*} +\mu^{*}_{i,0})h-
2(b_{1}w^{*}_{+}+c_{2}w^{*}_{-})h+\mu h^{*}_{i,0}=0
\quad&\mbox{in}\ (-\ell, \ell),\\
k''+(\lambda^{*}+\mu^{*}_{i,0})
(\mbox{sign}\,w^{*})h
-2(b_{1}w^{*}_{+}-c_{2}w^{*}_{-})h
+\mu (\mbox{sign}\,w^{*}) h^{*}_{i,0}=0\quad&\mbox{in}\ (-\ell, \ell ),\\
\int^{\ell}_{-\ell}h^{*}_{i,0}\,h=0,\\
h(\pm\ell )=k(\pm\ell )=0.
\end{cases}
\end{equation}
By taking the inner product of the first equation of \eqref{lin2}
with $h^{*}_{i,0}$, we use the first equation of \eqref{lin} to
see $\mu =0$.
Then the first equation is reduced to
$$
h''+(\lambda^{*}+\mu^{*}_{i,0})h-
2(b_{1}w^{*}_{+}+c_{2}w^{*}_{-})h=0
\quad\mbox{in}\ (-\ell, \ell).
$$
Hence it follows that
$h=th^{*}_{i,0}$.
By the integral condition of \eqref{lin2}, 
we see $t=0$, and therefore, $h=0$.
Finally we obtain $k=0$ by the second equation of \eqref{lin2}.
Consequently, we deduce that
$
D_{(h,k,\mu )}
\widetilde{H}
(\lambda^{*}, h^{*}_{i,0}, k^{*}_{i,0}, \mu^{*}_{i,0}, 0)
$ is an isomorphism from
$\boldsymbol{X}_{p}\times\mathbb{C}$
to
$\boldsymbol{Y}_{p}\times\mathbb{R}$.
Therefore,
for each $i$,
the implicit function theorem yields a
neighborhood 
$\mathcal{U}^{*}_{i,0}$
of $(\lambda^{*},
h^{*}_{i,0}, k^{*}_{i,0}, \mu^{*}_{i,0}, 0)
\in\mathbb{R}\times\boldsymbol{X}_{p}\times\mathbb{C}\times\mathbb{R}$
and a small positive number $\delta_{i}$
and a mapping
$$
B((\lambda^{*}, 0); \delta^{*}_{i})
\ni(\lambda, \varepsilon )\mapsto
(h^{*}_{i}(\lambda, \varepsilon ), 
k^{*}_{i}(\lambda, \varepsilon ), 
\mu^{*}_{i}(\lambda, \varepsilon ))
\in\boldsymbol{X}_{p}\times\mathbb{C}
$$
of class $C^{1}$ such that
all the solutions of 
$\widetilde{H}(\lambda, h,k,\mu,\varepsilon )=0$
in $\mathcal{U}^{*}_{i,0}$ are represented by
$$
(h,k, \mu)=
(h^{*}_{i}(\lambda, \varepsilon ), 
k^{*}_{i}(\lambda, \varepsilon ), 
\mu^{*}_{i}(\lambda, \varepsilon ))\quad
\mbox{for}\ 
(\lambda, \varepsilon )\in 
B((\lambda^{*}, 0); \delta^{*}_{i}).
$$
Hence it holds that
$
(h_{i}(\lambda, 0 ), 
k_{i}(\lambda, 0 ), 
\mu_{i}(\lambda, 0 ))=
(h_{i,0}, k_{i,0}, \mu_{i,0})$,
where $
(h_{i,0}, k_{i,0}, \mu_{i,0})$
is the solution of \eqref{lin}
with $\lambda^{*}$ replaced by $\lambda$
satisfying
$\|h_{i,0}\|_{2}=1$ and $h_{i,0}'(-\ell )>0$.
Since $\mu_{i,0}\in\mathbb{R}$, then
we may assume $\mu_{i}(\lambda, \varepsilon )\in\mathbb{R}$
for $(\lambda, \varepsilon )\in B((\lambda^{*}, 0); \delta^{*}_{i})$
by 
the same argument just after \eqref{zero} 
in the proof of Theorem \ref{mainthm1}.
By the continuity of $\mu_{i}(\lambda, \varepsilon )$
with respect to $(\lambda, \varepsilon )$,
we can deduce from \eqref{jko} that
\begin{equation}\label{egorder}
\mu_{1}(\lambda, \varepsilon )<\mu_{2}(\lambda, \varepsilon )
<\cdots<\mu_{j}(\lambda, \varepsilon )
\,(\,<0<\,)\,
\mu_{j+1}(\lambda, \varepsilon )
\end{equation}
for any
$(\lambda, \varepsilon )\in 
B((\lambda^{*}, 0); \delta^{*}_{j})$.
Since
$$
\bigcup\,\biggl\{(\lambda^{*}-\delta^{*}_{i}, \lambda^{*}+\delta^{*}_{i})\,:\,
\lambda^{*}\in \biggl(\lambda_{j}-\dfrac{\eta}{2},
\varLambda+\dfrac{\eta}{2}\biggr)\biggr\}
$$
gives an open covering of the compact set
$[\lambda_{j}+\eta, \varLambda ]$,
then the similar procedure to the argument below
\eqref{j} ensures a small positive
$\delta$
which is independent of $\lambda\in [\lambda_{j}+\eta,
\varLambda ]$
and
$j\in \{2,\ldots, \overline{j}\}$
such that,
if $0<\varepsilon <\delta$,
then \eqref{egorder} holds true for
any 
$\lambda\in [\lambda_{j}+\eta,
\varLambda]$.
Then the proof of Theorem \ref{mainthm2} is complete.
\end{proof}

The Morse index 
$i(\lambda, \widetilde{u}^{\pm}_{j}(\lambda, \varepsilon ),
\widetilde{v}^{\pm}(\lambda, \varepsilon ))$
obtained in Theorem \ref{mainthm2} 
corresponds to the dimension of 
the local unstable manifolds for 
the steady-states
$(\widetilde{u}^{\pm}_{j}(\lambda, \varepsilon ),
\widetilde{v}^{\pm}(\lambda, \varepsilon ))$
of \eqref{para} with $\Omega =(-\ell, \ell)$.

\begin{cor}
In the case where $\Omega =(-\ell, \ell)$,
let $(\lambda, \widetilde{u}^{\pm}_{j}(\lambda, \varepsilon ),
\widetilde{v}^{\pm}_{j}(\lambda, \varepsilon ))
\in\mathcal{S}^{\pm}_{j,\alpha }(\lambda+\eta, \varLambda )$.
If $\alpha=1/\varepsilon $ is sufficiently large,
then there exist unstable local manifolds
$\mathcal{W}^{u}((\widetilde{u}^{\pm}_{j}(\lambda, \varepsilon ),
\widetilde{v}^{\pm}_{j}(\lambda, \varepsilon )); \mathcal{O}^{\pm}_{j})$
in $\boldsymbol{Z}_{\theta}:=W^{1+\theta,2}_{0}(\Omega )\times W^{1+\theta, 2}_{0}(\Omega )$
satisfying $\dim \mathcal{W}^{u}((\widetilde{u}^{\pm}_{j}(\lambda, \varepsilon ),
\widetilde{v}^{\pm}_{j}(\lambda, \varepsilon )); \mathcal{O}^{\pm}_{j})=j$,
where $\theta\in (0,1/2)$ and $\mathcal{O}^{\pm}_{j}\subset
\boldsymbol{Z}_{\theta}
$ are neighborhoods of 
$(\widetilde{u}^{\pm}_{j}(\lambda, \varepsilon ),
\widetilde{v}^{\pm}_{j}(\lambda, \varepsilon ))$, respectively.
\end{cor}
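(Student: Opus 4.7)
The plan is to mirror the short argument used for Corollary \ref{cor1}, since the abstract dynamical-systems framework is identical and only the steady-state changes. I would proceed in three concise steps.

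First, I would invoke the one-dimensional global existence and regularity theory for \eqref{para}: by \cite{Ki, Yag}, for any initial data in $\boldsymbol{Z}_{\theta}=W^{1+\theta,2}_{0}(\Omega)\times W^{1+\theta,2}_{0}(\Omega)$ with $\theta\in(0,1/2)$, the problem \eqref{para} admits a unique time-global solution in $C([0,\infty);\boldsymbol{Z}_{\theta})\cap C^{\infty}((0,\infty)\times\overline{\Omega})^{2}$. This is the same input used in the proof of Corollary \ref{cor1}, and it is valid for any positive steady-state, in particular for the segregation-type solutions $(\widetilde{u}^{\pm}_{j}(\lambda,\varepsilon),\widetilde{v}^{\pm}_{j}(\lambda,\varepsilon))\in\mathcal{S}^{\pm}_{j,\alpha}(\lambda_{j}+\eta,\varLambda)$.

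Second, I would recast \eqref{para} as an abstract quasilinear evolution equation on $\boldsymbol{Z}_{\theta}$, following \cite[p.520]{Yag}. The associated semiflow is of class $C^{1}$ in a neighborhood of any smooth steady-state, and the linearization at $(\widetilde{u}^{\pm}_{j}(\lambda,\varepsilon),\widetilde{v}^{\pm}_{j}(\lambda,\varepsilon))$ generates an analytic semigroup whose spectrum is determined by the eigenvalue problem \eqref{ege}. By Theorem \ref{mainthm2}, if $\alpha=1/\varepsilon$ is sufficiently large, this spectrum contains exactly $j$ eigenvalues with negative real part (counted with multiplicity) and no eigenvalue on the imaginary axis, so the stable and unstable spectral subspaces split cleanly and the unstable one has dimension $j$.

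Third, with this spectral gap in hand, I would apply the local unstable manifold theorem for quasilinear parabolic equations as formulated in \cite[Section 6.5]{Yag}. This produces, in a neighborhood $\mathcal{O}^{\pm}_{j}\subset\boldsymbol{Z}_{\theta}$ of $(\widetilde{u}^{\pm}_{j}(\lambda,\varepsilon),\widetilde{v}^{\pm}_{j}(\lambda,\varepsilon))$, an invariant local unstable manifold $\mathcal{W}^{u}((\widetilde{u}^{\pm}_{j}(\lambda,\varepsilon),\widetilde{v}^{\pm}_{j}(\lambda,\varepsilon));\mathcal{O}^{\pm}_{j})$ whose dimension equals the number of unstable eigenvalues, i.e.\ $j$.

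The only delicate point, as in Corollary \ref{cor1}, is checking that the hypotheses of the abstract unstable manifold theorem in \cite[Section 6.5]{Yag} are fulfilled by \eqref{para}: namely the sectoriality of the linearized operator on $\boldsymbol{Z}_{\theta}$, absence of purely imaginary spectrum, and $C^{1}$ smoothness of the nonlinearity in the chosen fractional-power scale. All of these are already verified for \eqref{para} in the one-dimensional setting in \cite{Yag} and were used in Corollary \ref{cor1}; the argument transfers verbatim because the steady-states on $\mathcal{S}^{\pm}_{j,\alpha}(\lambda_{j}+\eta,\varLambda)$ are positive and smooth, so the quasilinear coefficients $1+\alpha v$, $1+\alpha u$ remain uniformly positive. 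Hence no new analytical obstacle arises, and the proof reduces to citing Theorem \ref{mainthm2} together with \cite[Section 6.5]{Yag}.
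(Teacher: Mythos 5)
Your proposal is correct and follows essentially the same route as the paper: the paper's own proof simply states that the argument is the same as for Corollary \ref{cor1}, i.e. it combines the one-dimensional global existence and the dynamical system in $\boldsymbol{Z}_{\theta}$ from \cite{Ki, Yag} with the eigenvalue count of Theorem \ref{mainthm2} and the unstable manifold theorem in \cite[Section 6.5]{Yag}. Your additional remarks on the spectral gap (no eigenvalue on the imaginary axis, which follows from $\mu_{j}<0<\mu_{j+1}$ in Theorem \ref{mainthm2}) and on the uniform positivity of the quasilinear coefficients only make explicit what the paper leaves implicit.
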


\begin{proof}
The proof is essentially same as that of Corollary \ref{cor1}.
\end{proof}

\begin{figure}
\begin{center}
{\includegraphics*[scale=.5]{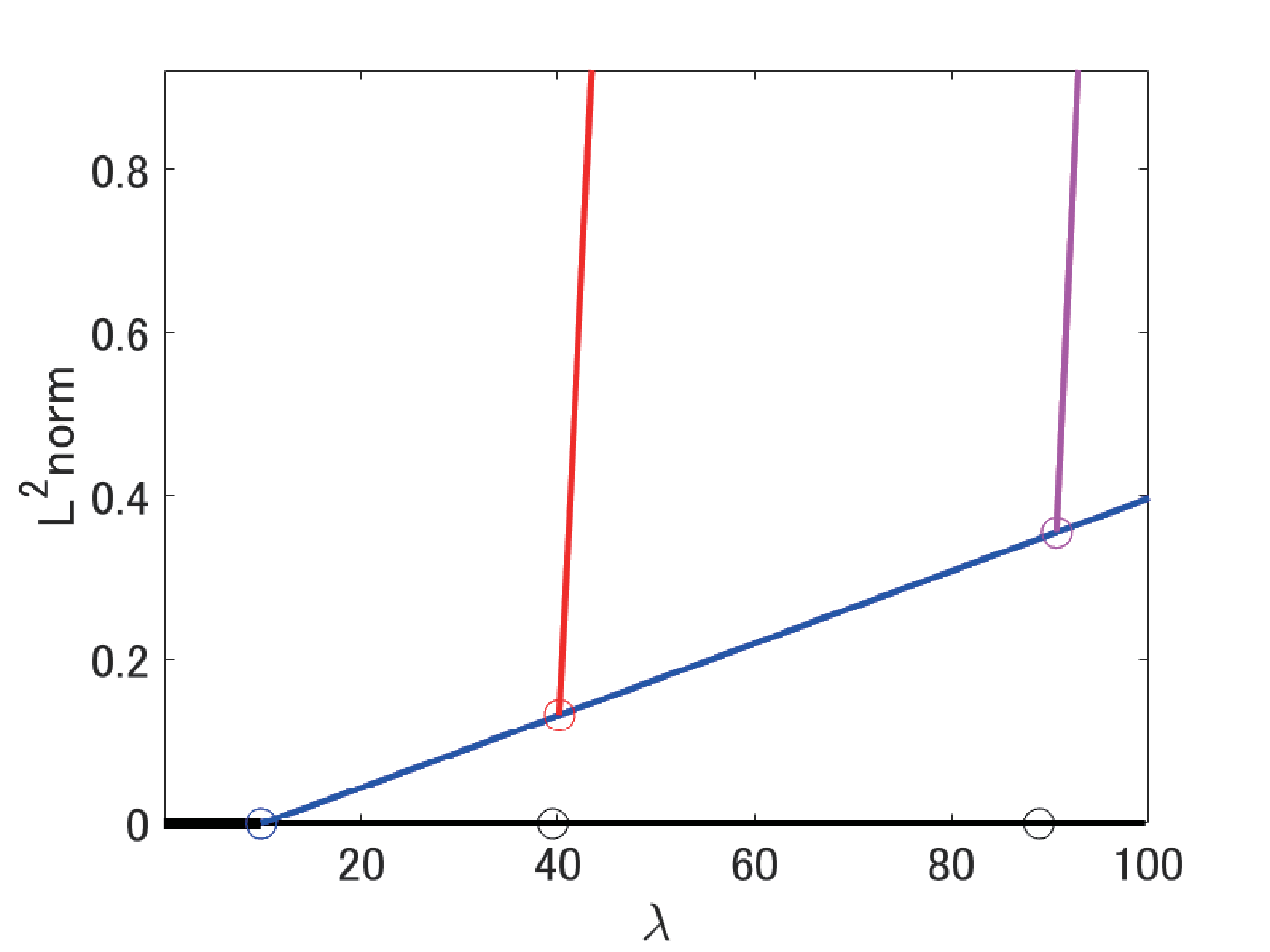}}\\
\caption{Bifurcation diagram of solutions of \eqref{SP}.}

{\includegraphics*[scale=.4]{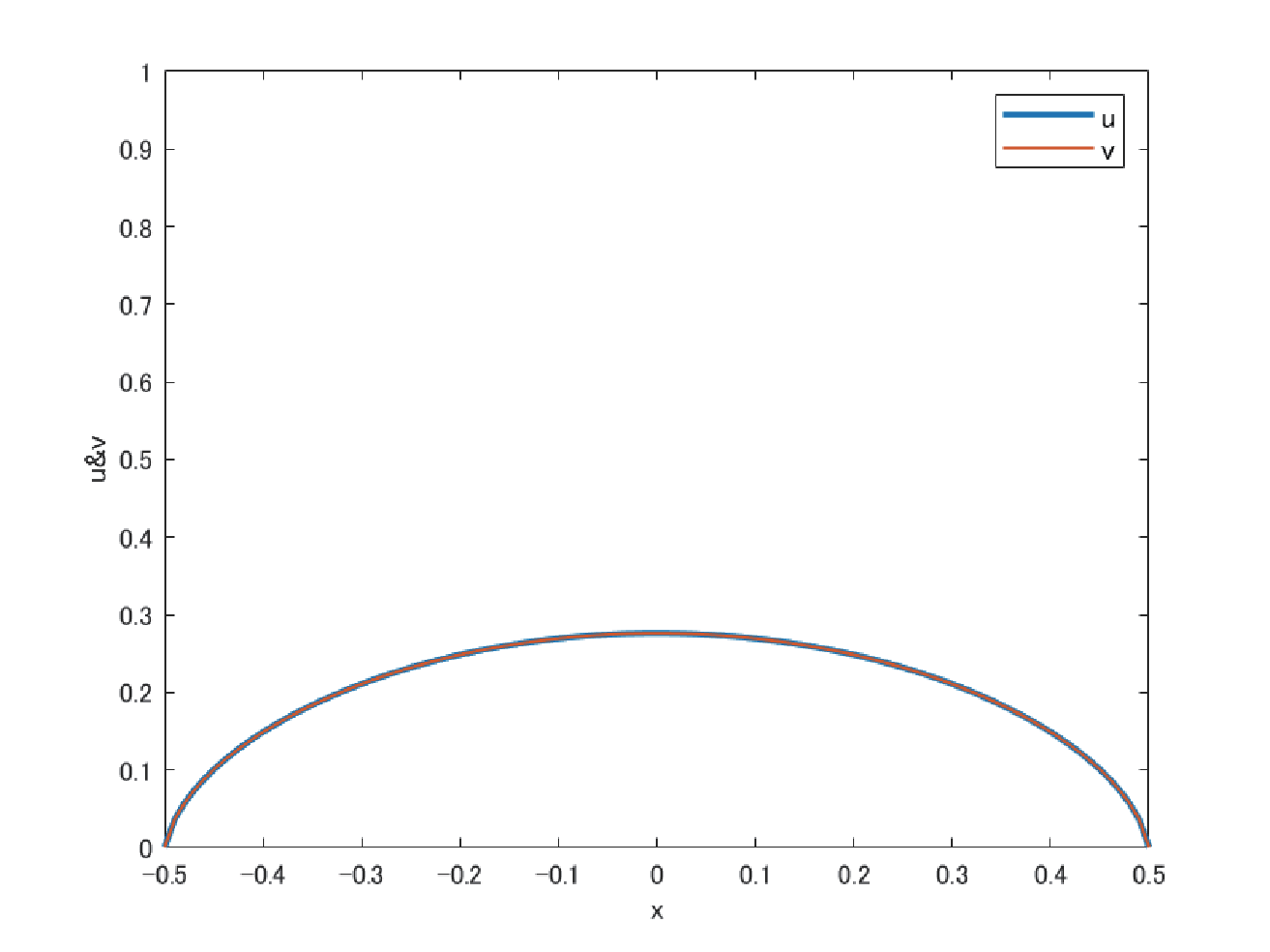}}\\
\caption{Profile of a solution on blue curve at $\lambda=59.8286$.}
\end{center}\end{figure}

\begin{figure}
\centering
\subfigure[Profile of a solution on red upper branch in Figure 1 
at $\lambda=40.3421$.]{
\includegraphics*[scale=.3]{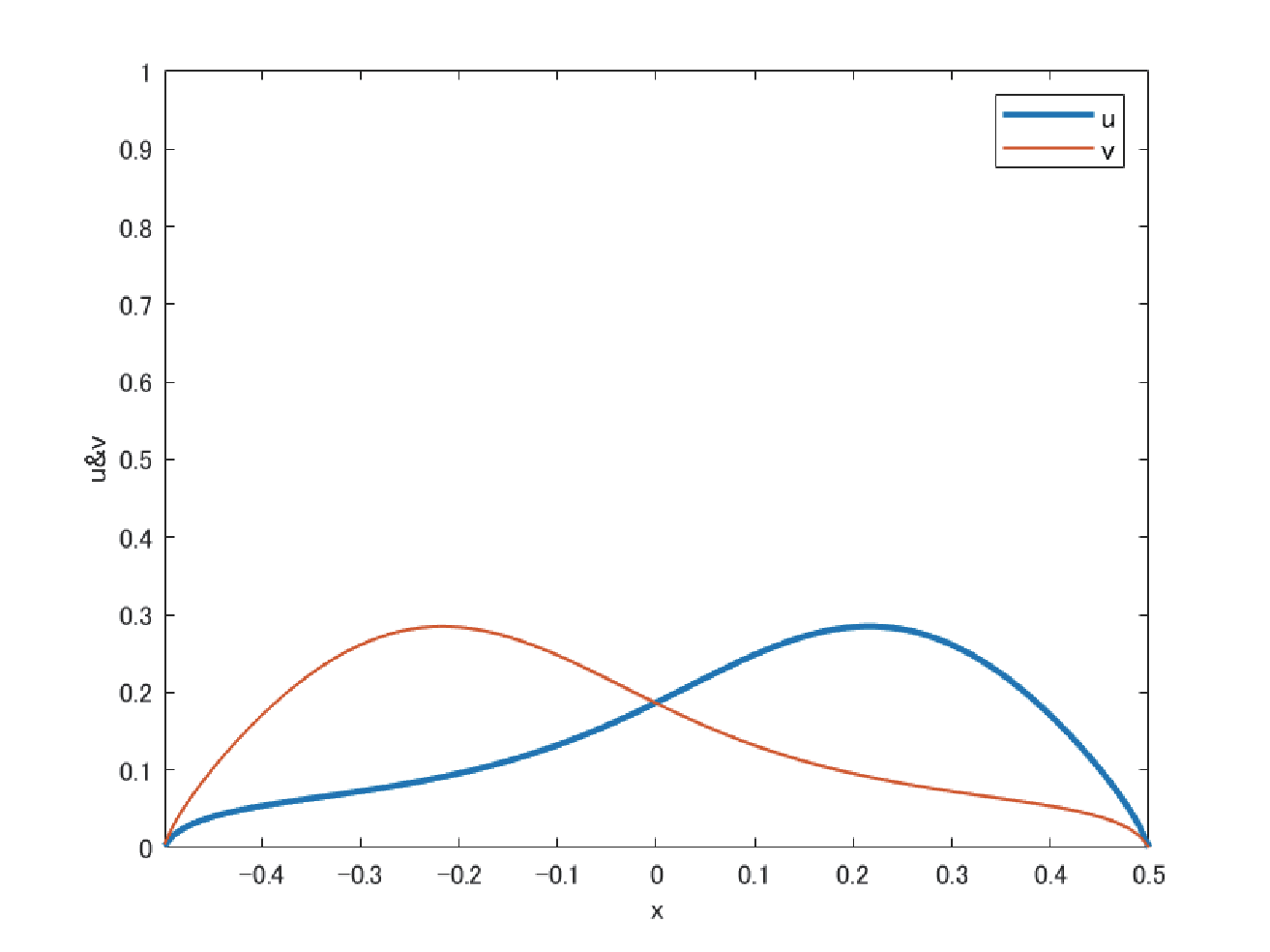}
\label{figa}}
\subfigure[Profile of a solution on red lower branch in Figure 1 
at $\lambda=40.3421$.]{
\includegraphics*[scale=.3]{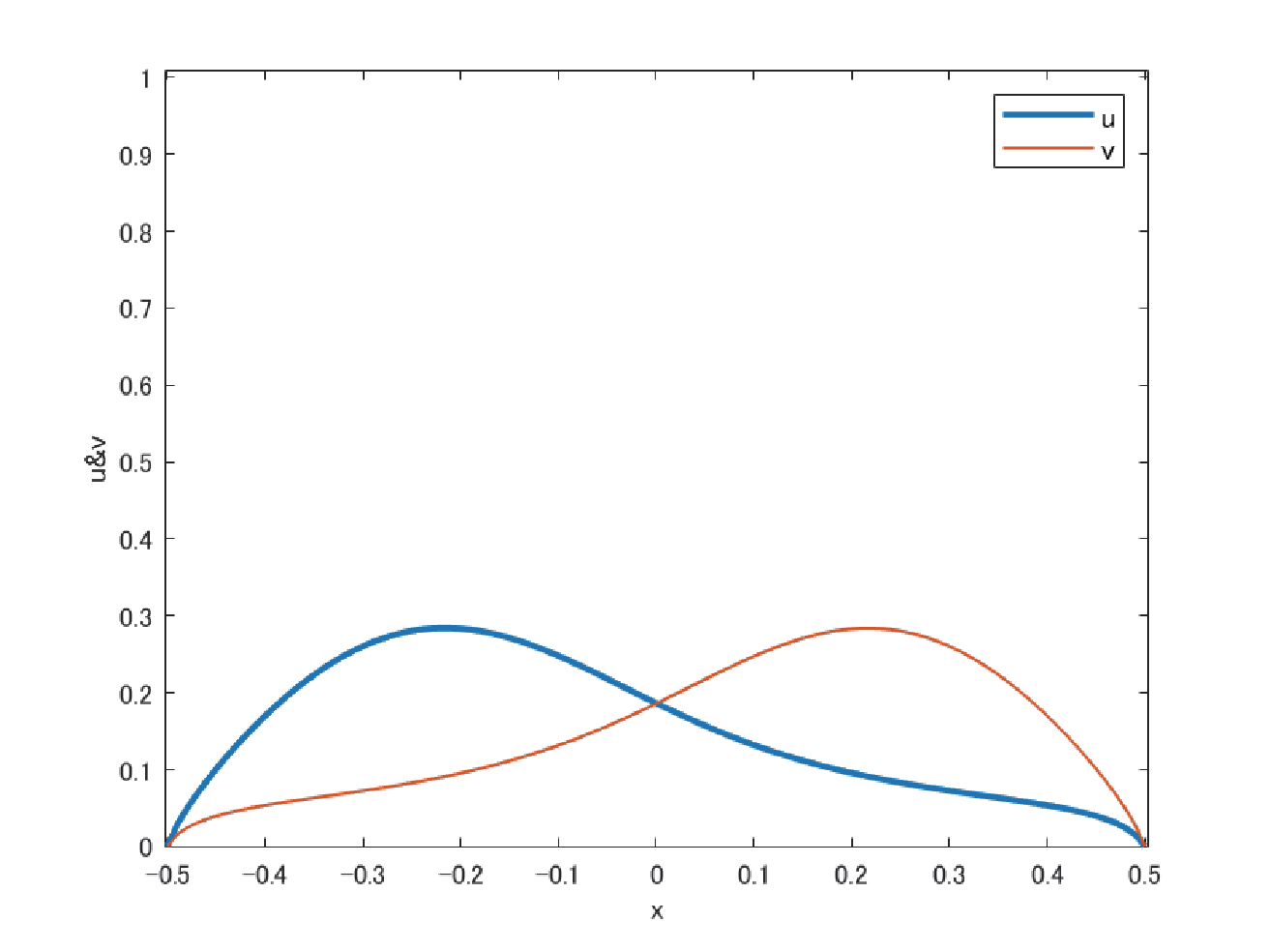}
\label{figb}
}
\centering
\subfigure[Profile of a solution on red upper branch in Figure 1 
at $\lambda=43.0673$.]{
\includegraphics*[scale=.3]{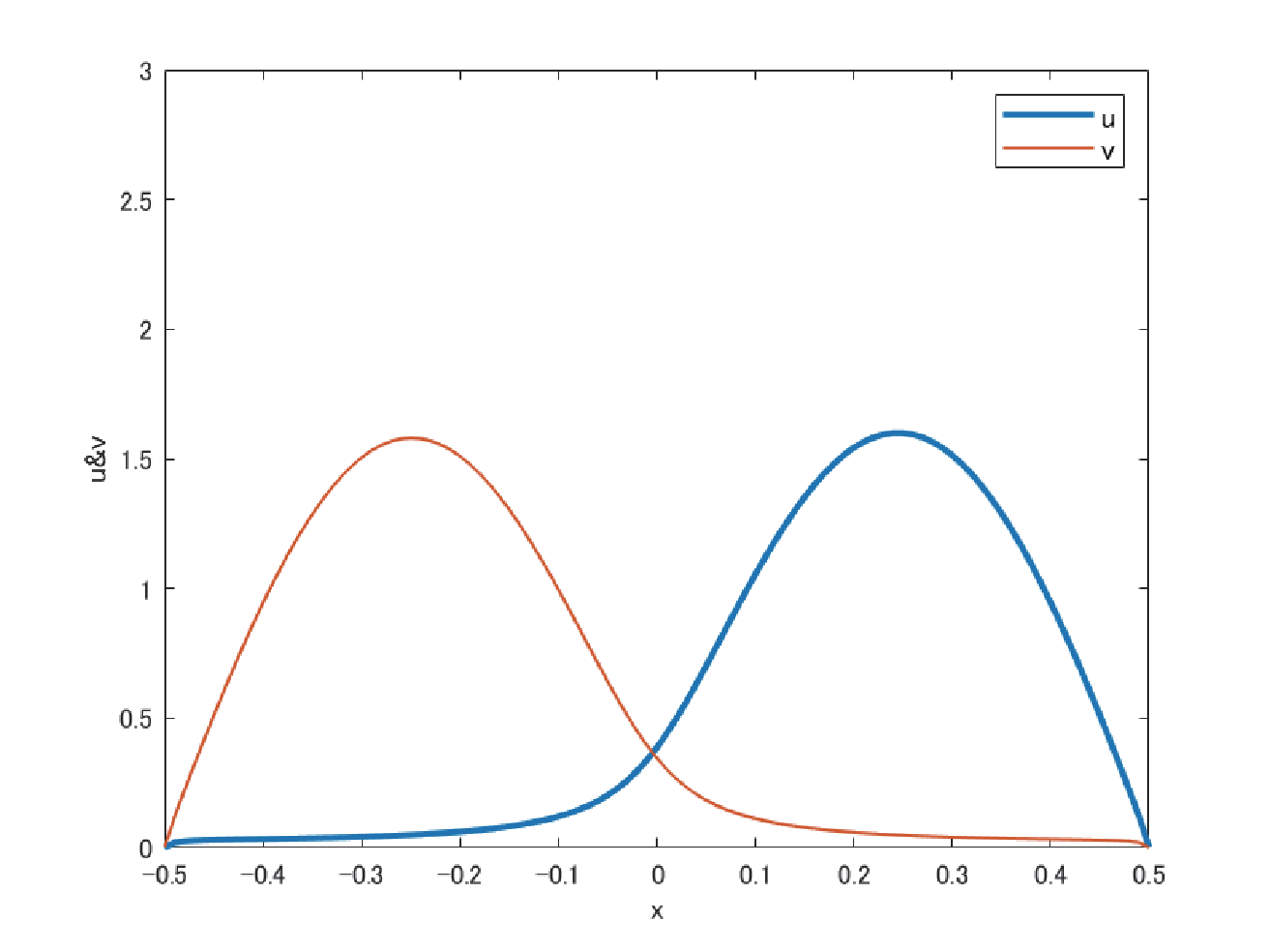}
\label{figc}}
\subfigure[Profile of a solution on red lower branch in Figure 1 
at $\lambda=43.0673$.]{
\includegraphics*[scale=.3]{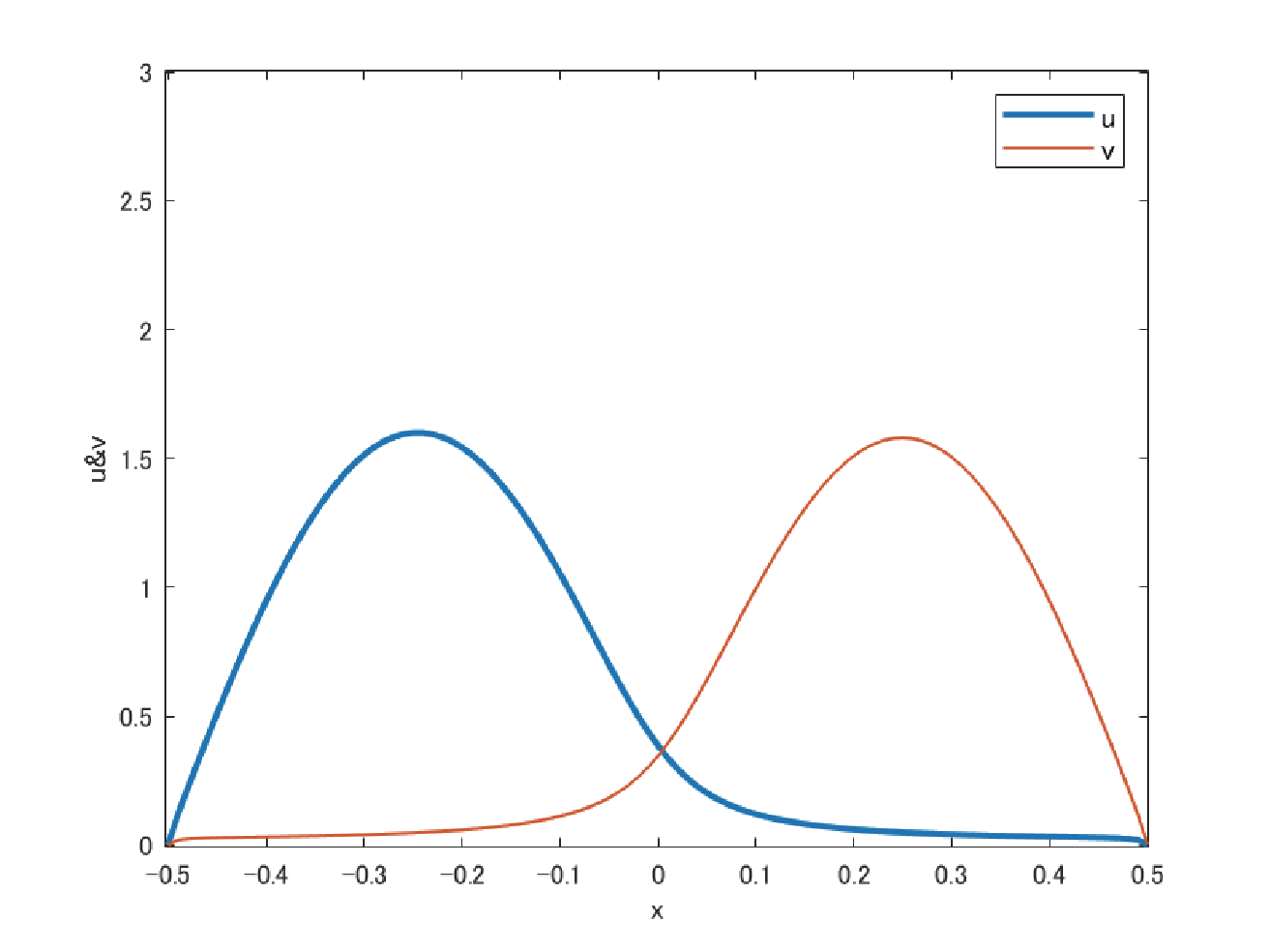}
\label{figd}
}
\centering
\subfigure[Profile of a solution on purple upper branch in Figure 1 
at $\lambda=91.5836$.]{
\includegraphics*[scale=.3]{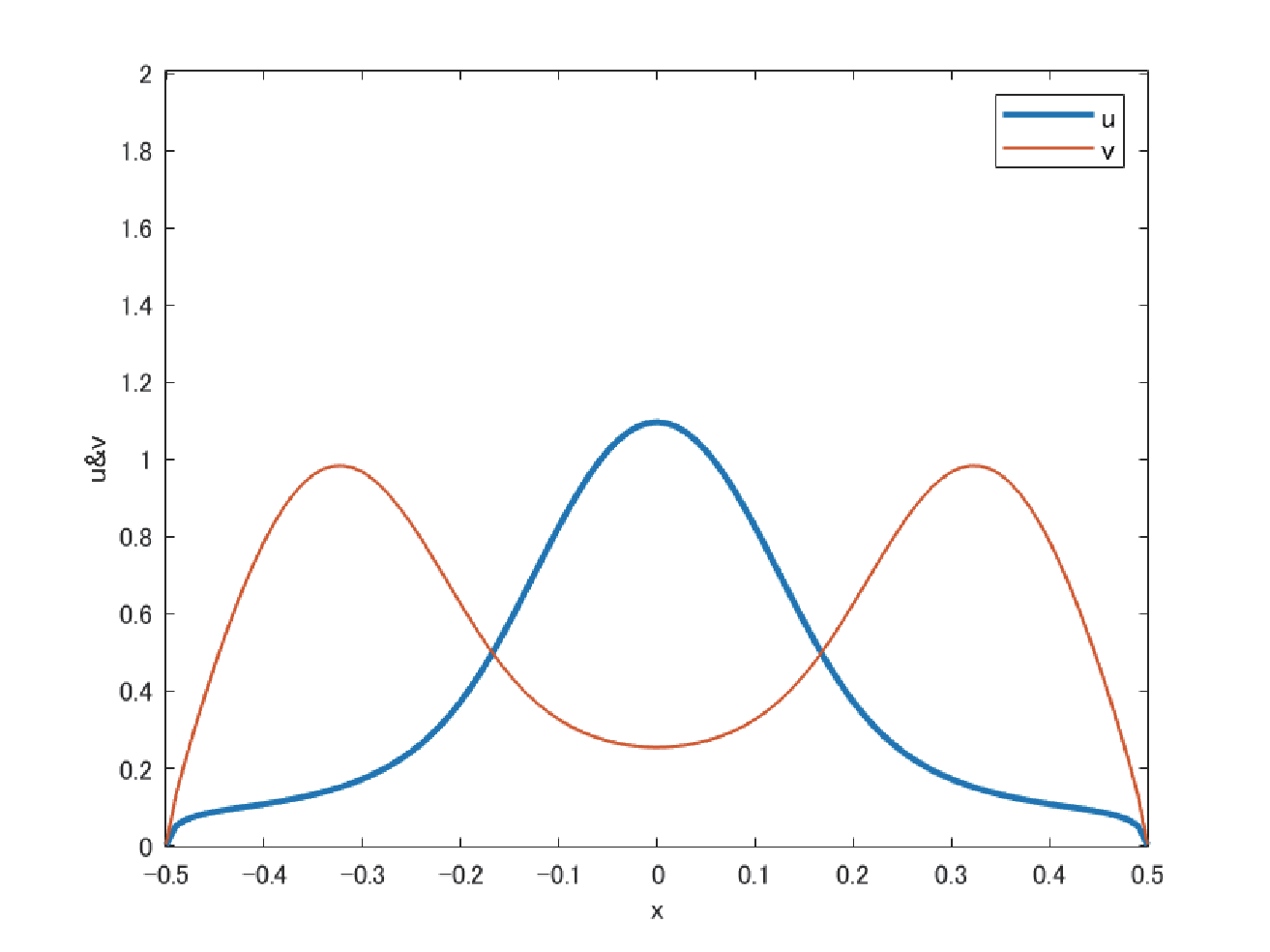}
\label{figc}}
\subfigure[Profile of a solution on purple lower branch in Figure 1 
at $\lambda=91.5836$.]{
\includegraphics*[scale=.3]{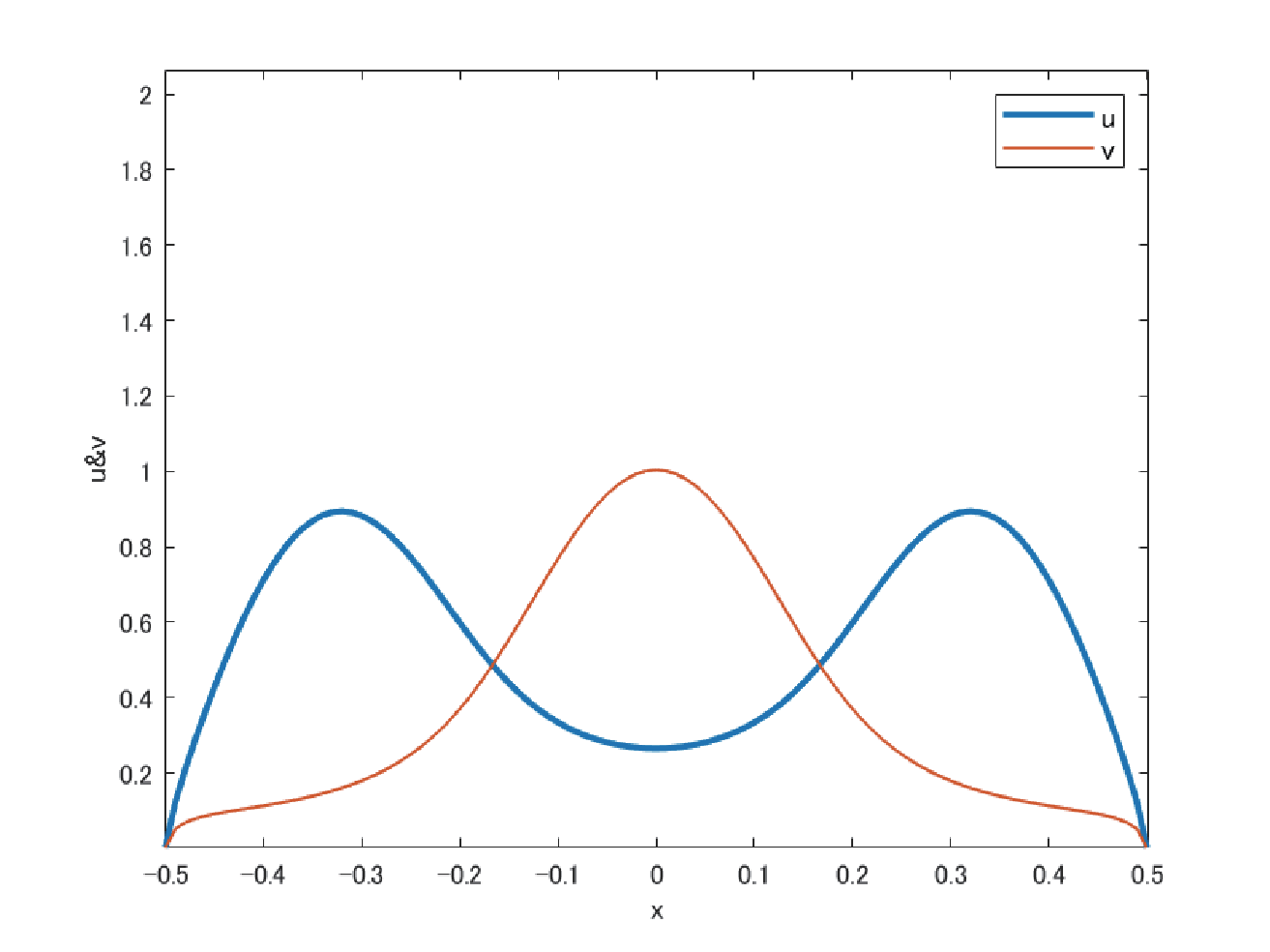}
\label{figd}
}
\caption{
Profiles of solutions on red and purple pitchfork bifurcation branches
in Figure 1.}
\label{fig3}
\end{figure}

\begin{figure}
\begin{center}
{\includegraphics*[scale=.4]{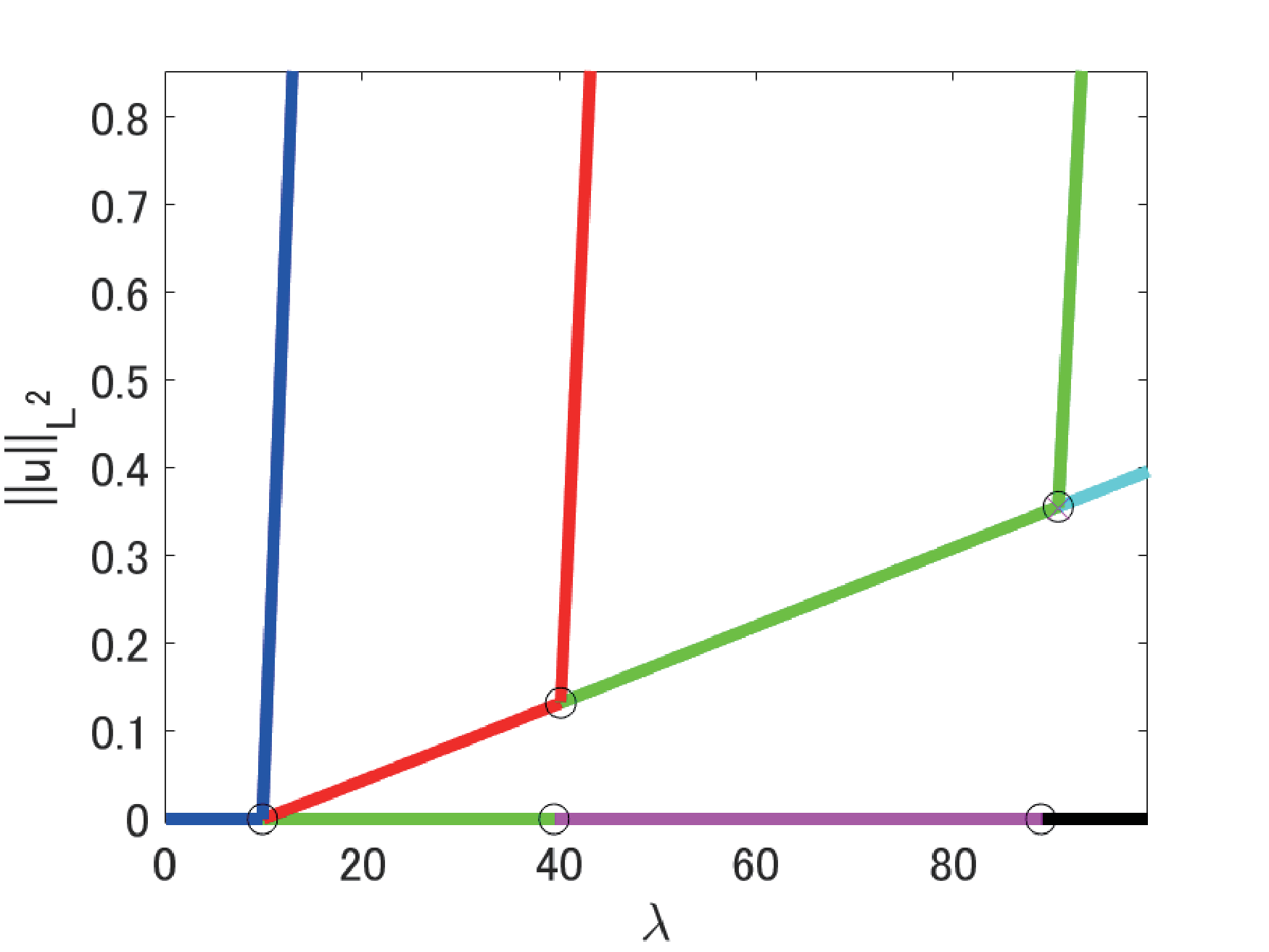}}\\
\caption{Morse index of solutions of \eqref{SP}.}
\end{center}\end{figure}

\section{Numerical results}
In this section, we exhibit the numerical results
on the Morse index of positive steady-states 
by using the continuation software \texttt{pde2path}
\cite{BKS, DRUW, Ue, UWR} based on an FEM discretization
of the stationary problem.
For \eqref{SP}, our setting of parameters in the numerical simulation 
is as follows:
\begin{equation}\label{setting}
\Omega=(-0.5,0.5),
\quad \alpha=20, 
\quad b_{1}=3, 
\quad b_{2}=2, 
\quad c_{1}=2, 
\quad c_{2}=1.
\end{equation}

Our previous paper with Inoue \cite{IKS}
has already shown the numerical bifurcation diagram as in Figure 1,
where
the horizontal axis represents the bifurcation parameter $\lambda$, 
and the vertical axis represents the $L^{2}$ norm of the 
$u$ component of positive solutions $(u,v)$ to \eqref{SP}.
In Figure 1,
the blue curve is corresponding to the branch 
$\mathcal{C}_{20, \varLambda}$ 
of small coexistence
(see \eqref{bif1})
bifurcating from the trivial solution at $\lambda=\lambda_{1}$.
The theoretical value of $\lambda_{1}$ with \eqref{setting} is
equal to
$\pi^2\,(\,\fallingdotseq 9.8696)$.
Figure 2, also a reprint from \cite{IKS}, 
shows the profile of a positive 
solution $(u,v)$ 
corresponding to the point at $\lambda=59.8286$
on the blue curve $\mathcal{C}_{20, \varLambda}$.
In Figure 1, 
the red curve exhibits
the upper branch $\mathcal{S}_{2,20,\varLambda}^{+}$
and lower one $\mathcal{S}_{2,20,\varLambda}^{-}$ 
(\eqref{seg+} and \eqref{seg-}).
The pitchfork bifurcation curve $\mathcal{S}_{2, 20, \varLambda}$
bifurcates from
a solution on the blue curve at $\beta_{2,20}$.
It should be noted the $L^{2}$ norm of each component
of $\mathcal{S}_{2,20, \varLambda}^{+}$ and 
$\mathcal{S}_{2,20, \varLambda}^{-}$
are shown overlapped.
It was shown in \cite{IKS} that 
the secondary bifurcation point 
$\beta_{2,\alpha}$ theoretically tends to 
$\lambda_{2}=(2\pi)^2\,(\,\fallingdotseq 39.4784)$
as $\alpha\to\infty$.
In Figure 3, also a reprint from \cite{IKS},
(a) and (b) exhibit the profiles of
solutions on the upper branch $\mathcal{S}^{+}_{2, 20, \varLambda}$ 
and the lower one $\mathcal{S}^{-}_{2, 20, \varLambda}$ 
of the red pitchfork bifurcation curve 
$\mathcal{S}_{2, 20, \varLambda}$
at
$\lambda=40.3421$, respectively.
It can be observed that $u$ and $v$ are somewhat spatially segregated.
Furthermore, as the solution moves away 
from the bifurcation point on the blue curve, 
the numerical simulation shows that 
the segregation between $u$ and $v$ becomes more overt. 
Actually, (c) and (d) in Figure 3 exhibit the profiles of
solutions on $\mathcal{S}^{+}_{2, 20, \varLambda}$ 
and $\mathcal{S}^{+}_{2, 20, \varLambda}$ 
on the red pitchfork bifurcation curve 
$\mathcal{S}_{2, 20, \varLambda}$ 
at
$\lambda=43.0673$, 
where $u$ and $v$ considerably segregate each other.

The numerical result of this paper is shown in Figure 4 exhibiting
the Morse index of each solutions of \eqref{SP} with \eqref{setting}.
It should be noted that the blue curve in Figure 4 bifurcating
from the trivial solution at $\lambda=\lambda_{1}
(=\pi^2\fallingdotseq 9.8696)$
is corresponding to the bifurcation curve of the semi-trivial
solutions $(u,v)=(\theta_{\lambda}, 0)$ for $\lambda>\lambda_{1}$,
which is not included in Figure 1 focusing only on positive solutions.
In Figure 4, the blue part is the stable branch of 
the trivial solution and the semi-trivial solutions;
the red part is the unstable branch of 
the solutions of small coexistence and of segregation
with the Morse index $1$;
the green part is the unstable branch of 
the solutions with the Morse index $2$,
the light blue part is unstable branch of 
the solutions with the Morse index $3$.

%
%

\end{document}